\newcommand{\mylabel}[2]{#2\def\@currentlabel{#2}\label{#1}}
\newtheorem{lemma}{Lemma}[section]
\newtheorem{thm}[lemma]{Theorem}
\newtheorem*{thmx*}{Theorem~6.1}
\newtheorem*{thmy*}{Theorem~7.1}
\newtheorem{prop}[lemma]{Proposition}
\newtheorem{cor}[lemma]{Corollary}
\newtheorem*{cor*}{Corollary}
\theoremstyle{definition}
\newtheorem{defn}[lemma]{Definition}
\newtheorem{quest}[lemma]{Question}
\theoremstyle{remark}
\newtheorem{rem}[lemma]{Remark}
\newcommand\restr[2]{{
  \left.\kern-\nulldelimiterspace 
  #1 
  \littletaller 
  \right|_{#2} 
  }}
\newcommand{\littletaller}{\mathchoice{\vphantom{\big|}}{}{}{}}
\newcommand{\fac}[1]{\bf{#1}}
\newcommand{\matR} {\ensuremath {\mathbb{R}}}
\newcommand{\matQ} {\ensuremath {\mathbb{Q}}}
\newcommand{\matZ} {\ensuremath {\mathbb{Z}}}
\newcommand{\matH} {\ensuremath {\mathbb{H}}}
\newcommand{\Isom}{\ensuremath {\mathrm{Isom}}}
\author{Jacopo Guoyi Chen \and Edoardo Rizzi}
\address{Scuola Normale Superiore \newline Piazza dei Cavalieri 7, 56126 Pisa, Italy}
\email{jacopo.chen@sns.it, edoardo.rizzi@sns.it}
\thanks{}
\title{Cusp-transitive 4-manifolds with every cusp section}
\begin{document}

\begin{abstract}
We realize every closed flat 3-manifold as a cusp section of a complete, finite-volume hyperbolic 4-manifold whose symmetry group acts transitively on the set of cusps. Moreover, for every such 3-manifold, a dense subset of its flat metrics can be realized as cusp sections of a cusp-transitive 4-manifold. Finally, we prove that there are a lot of 4-manifolds with pairwise isometric cusps, for any given cusp type.
\end{abstract}

\maketitle

\section{Introduction}

This work is a continuation of~\cite{rizzi}, where the second-named author constructed some new cusp-transitive hyperbolic $4$-manifolds. We recall in the following the general settings and results.

We say that a complete, finite-volume hyperbolic manifold is \emph{cusp transitive} if the action of its isometry group is transitive on its cusps.
This condition is trivially satisfied by manifolds with a single cusp (also called \emph{$1$-cusped} manifolds); examples are known in dimension $2$, $3$ (infinitely many with bounded volume, in fact) and $4$~\cite{KM}.
For $n>4$, the existence of $1$-cusped hyperbolic $n$-manifolds is unknown, and in general there are no explicit examples of cusp-transitive manifolds in the literature. However, it is easy to obtain examples of the latter in dimension up to $8$ by coloring some well-known right-angled polytopes.

The \emph{type} of a cusp of a hyperbolic manifold is the diffeomorphism class of its section, which is a closed flat hypersurface.
In this article we will look at the $4$-dimensional case; the possible cusp types are closed flat $3$-manifolds. There are $10$ such manifolds~\cite{conway-rossetti}; six of them are orientable:
\begin{itemize}
    \item $E_1$, the $3$-torus;
    \item $E_2$, the $\frac 12$-twist manifold;
    \item $E_3$, the $\frac 13$-twist manifold;
    \item $E_4$, the $\frac 14$-twist manifold;
    \item $E_5$, the $\frac 16$-twist manifold;
    \item $E_6$, the Hantzsche--Wendt manifold;
\end{itemize}
while four are non-orientable: we will call them $B_1$, $B_2$, $B_3$ and $B_4$ (respectively denoted by $+a1$, $-a1$, $+a2$ and $-a2$ in~\cite{conway-rossetti}). These $10$ manifolds can be constructed as in 
Figures~\ref{fig:N} and~\ref{fig:N2} by gluing facets of polyhedral fundamental domains; the gluing maps can be obtained by inspecting~\cite[Table~12]{conway-rossetti}.

In the case of $1$-cusped orientable $4$-manifolds, the cusp types $E_1$~\cite{KM} and $E_2$~\cite{KS} can be realized, while $E_3$ and $E_5$ cannot~\cite{LR}.
More generally, the types $E_6$~\cite{FKS} and $E_4$~\cite{rizzi} can be realized by cusp-transitive $4$-manifolds. The construction in the latter paper also realizes $E_1$, $E_2$ and $E_6$ as cusp types. As for the non-orientable case, there exist $1$-cusped $4$-manifolds with cusp type $B_1$~\cite{B1-mfd} and $B_2$~\cite{B2-mfd}.

We improve on the technique of~\cite{rizzi}, by requiring a less explicit construction, and we prove that the remaining cusp types can also be realized; indeed, our construction actually realizes all the cusp types. Specifically we show:

\begin{thm}\label{thm:main0}
For each closed flat $3$-manifold $N$ there exists a cusp-transitive hyperbolic $4$-manifold $M$ with cusps of type $N$.
\end{thm}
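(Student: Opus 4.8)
\emph{Proof strategy.} The plan is to reduce the statement to a covering-space construction over an orbifold with a single cusp, and then to produce the required cover by a separability argument; the delicate points are the single-cuspedness of the orbifold and the exact control of the cusp cross-section in the cover.

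\emph{Reduction.} Write $N=\matE^3/\Delta$ with $\Delta=\pi_1(N)$ a Bieberbach group. It suffices to find a finite-volume hyperbolic $4$-orbifold $\calO=\matH^4/\Lambda$ with a \emph{single} cusp (or, more generally, an orbifold whose cusp-permuting symmetries act transitively on its cusps and lift), whose peripheral subgroup $P\leq\Lambda$ is a crystallographic group containing $\Delta$ as a finite-index normal subgroup, together with a torsion-free finite-index normal subgroup $\Gamma\trianglelefteq\Lambda$ such that $\Gamma\cap P=\Delta$. Indeed, $M=\matH^4/\Gamma$ is then a manifold, the covering $M\to\calO$ is regular with deck group $\Lambda/\Gamma$, and every cusp of $M$ lies over the unique cusp of $\calO$; since the deck group permutes transitively the components of the preimage of a cusp neighbourhood of $\calO$, it acts transitively on the cusps of $M$, which is therefore cusp transitive. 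Moreover any peripheral subgroup of $\Gamma$ has the form $\Gamma\cap gPg^{-1}=g(\Gamma\cap P)g^{-1}=g\Delta g^{-1}$ (using $\Gamma\trianglelefteq\Lambda$), so every cusp section of $M$ is isometric to $\matE^3/\Delta$, hence diffeomorphic to $N$ by Bieberbach's theorem. Thus $M$ is as required.

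\emph{The orbifold.} For each of the ten flat $3$-manifolds $N$, I would first fix a crystallographic group $P=P_N$ with $\Delta\trianglelefteq P_N$ of finite index; one may take $P_N$ generated by $\Delta$ and the symmetries of its translation lattice that normalise it, so that up to isomorphism $P_N$ depends essentially only on the crystal system of $N$, leaving finitely many groups to treat. One then needs a single-cusped finite-volume hyperbolic $4$-orbifold with cusp cross-section $\matE^3/P_N$. Existence of a hyperbolic $4$-orbifold with a cusp of this flat-orbifold type is guaranteed by the Long--Reid construction realising every flat $3$-orbifold as a cusp cross-section of a hyperbolic $4$-orbifold; to make it single-cusped one chooses the ambient arithmetic group so that it has a single orbit of isotropic lines, which for the $P_N$ that actually occur can be arranged by working with suitable (almost) unimodular quadratic forms of signature $(4,1)$ and, where necessary, passing to commensurable groups with fewer cusps.

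\emph{The cover and the main obstacle.} It remains to produce $\Gamma\trianglelefteq\Lambda$ torsion-free, of finite index, with $\Gamma\cap P_N=\Delta$ exactly. Since $\Delta$ is a finite-index subgroup of the peripheral subgroup $P_N$, peripheral separability for arithmetic hyperbolic orbifolds (McReynolds) yields a finite-index subgroup $\Lambda_0\leq\Lambda$ with $\Lambda_0\cap P_N=\Delta$; intersecting with a torsion-free finite-index subgroup (Selberg) while keeping $\Delta$ inside, and then passing to a normal subgroup, should give $\Gamma$. The hard part is to preserve the \emph{exact} equality $\Gamma\cap P_N=\Delta$ — not merely a finite-index inclusion $\Gamma\cap P_N\leq\Delta$ — \emph{simultaneously} with the normality of $\Gamma$ in $\Lambda$ (needed for both cusp-transitivity and uniformity of the cusp type); this is precisely where one must use separability in the strong ``prescribed cusp shape'' form, and it is, together with securing the single-cuspedness of $\calO$ in the previous step, the main obstacle of the argument. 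Once $\Gamma$ is in hand, $M=\matH^4/\Gamma$ is the desired cusp-transitive hyperbolic $4$-manifold all of whose cusps have type $N$.
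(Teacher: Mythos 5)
Your reduction is sound as far as it goes: if one can produce a one-cusped finite-volume orbifold $\mathcal O=\matH^4/\Lambda$ with peripheral subgroup $P_N\trianglerighteq\Delta=\pi_1(N)$ and a torsion-free finite-index \emph{normal} $\Gamma\trianglelefteq\Lambda$ with $\Gamma\cap P_N=\Delta$ exactly, then $\matH^4/\Gamma$ is indeed cusp-transitive with all cusps of type $N$. But the two steps you flag as ``the main obstacle'' are not side issues --- they are the entire content of the theorem, and neither is supplied by the results you invoke. First, Long--Reid-type realization theorems produce a hyperbolic $4$-orbifold with \emph{a} cusp of the prescribed flat type, with no control on the total number of cusps; arranging a single orbit of isotropic lines for an arithmetic group \emph{and} simultaneously prescribing the peripheral crystallographic group $P_N$ is not known to be possible in general, and no argument is given. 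Second, the exactness problem is genuinely obstructed by your own setup: separability yields a finite-index (non-normal) $\Lambda_0$ with $\Lambda_0\cap P_N=\Delta$, but passing to the normal core can only shrink the intersection, so the cusp section of the resulting cover is a priori a finite cover of $N$ rather than $N$ itself. What you actually need is a finite quotient $\Lambda\to F$ with torsion-free kernel whose restriction to $P_N$ has kernel exactly $\Delta$; no standard separability or residual-finiteness statement produces this, and this ``prescribed cusp shape'' control is precisely what is hard. (Note also that $\Gamma\trianglelefteq\Lambda$ forces $\Gamma\cap P_N\trianglelefteq P_N$, so your requirement $\Delta\trianglelefteq P_N$ is necessary but far from sufficient.)

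The paper sidesteps both difficulties by working in the opposite direction. It exhibits explicit Coxeter $4$-polytopes $P$ and $V$ with exactly one ideal vertex, whose horospherical links ($C$, a cuboid, and $K$, a triangular prism) tessellate every flat $3$-manifold via ``layered'' and ``marked'' tessellations. Gluing copies of the polytope along the pattern of such a tessellation of $N$ produces a one-cusped \emph{developable reflectofold} whose cusp section is already exactly $N$ --- not an orbifold quotient of it --- and Davis' basic construction then yields a finite manifold cover on which the reflection data induce a cusp-transitive symmetry group without further unwrapping the cusp. This is why the exactness problem never arises there: the object playing the role of your $\mathcal O$ has manifold cusp section $N$ from the start. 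If you want to pursue your route, you would need, at a minimum, a concrete source of one-cusped hyperbolic $4$-orbifolds with prescribed peripheral subgroup and a mechanism (beyond separability) forcing $\Gamma\cap P_N=\Delta$ for a normal $\Gamma$; as written, the proposal is a plausible program rather than a proof.
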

By Remark~\ref{rem:orient}, if $N$ is orientable, we can also take $M$ to be orientable.
The proof of this theorem is split into two parts: in Section~\ref{sec:layered} we discuss the case of the manifolds $E_1$, $E_2$, $E_4$, $E_6$, $B_1$, $B_2$, $B_3$ and $B_4$, while Section~\ref{sec:remaining} is dedicated to $E_3$ and $E_5$. The $4$-manifolds constructed in each section are all commensurable to each other, but they are arithmetic in the first case and non-arithmetic in the second. Hence, they fall into two commensurability classes.

Using an argument inspired by~\cite{N}, this result is strengthened in Section~\ref{sec:density} as follows:
\begin{thmx*}
For every closed flat $3$-manifold $N$, the set of flat metrics on $N$ which can be realized as cusp sections of a cusp-transitive $4$-manifold is dense in the space of all flat metrics of $N$.
\end{thmx*}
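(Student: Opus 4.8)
The plan is to realize a dense set of flat metrics on $N$; working modulo scaling and isotopy this amounts to density in a finite-dimensional space $\calT(N)$, which in turn projects onto the moduli space, so density there suffices. Fix the Bieberbach group $\Gamma_N < \Isom(\matR^3) = \matR^3 \rtimes \Or(3)$ of $N$, with translation lattice $L = \matZ^3$ and linear holonomy $\theta\colon H \to \GL_3(\matZ)$ (determined up to conjugacy for each of the ten flat $3$-manifolds). A flat metric on $N$ is then a positive definite inner product $\beta$ on $\matR^3$ preserved by $\theta(H)$, taken up to scaling and up to the finite group of lattice automorphisms normalizing $\theta(H)$; the space $\calT(N)$ of these is a real-analytic manifold (a quotient of a symmetric space attached to the centralizer of $\theta(H)$ in $\GL_3(\matR)$), and the rational inner products form a dense subset of it.

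First I would turn a rational flat metric into an arithmetic cusp. Embed $\Isom(\matR^3) \hookrightarrow \Isom(\matH^4)$ as the stabilizer of a point $\xi \in \partial\matH^4$, so that $\Gamma_N$ becomes a parabolic subgroup acting on the horospheres centered at $\xi$ as before. For a rational $\beta$, clear denominators so that $\beta$ is integral; then $\theta(H) < \Or(\beta,\matZ)$, the lattice is $\matZ^3$, and $Q := \beta \perp \langle 1,-1\rangle$ is an integral isotropic quadratic form of signature $(4,1)$ for which $\Gamma_N$ is, up to conjugacy and finite index, the stabilizer in the arithmetic lattice $\Or(Q,\matZ)$ of the isotropic line spanned by the hyperbolic summand. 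Hence $\matH^4/\Or(Q,\matZ)$ is a finite-volume hyperbolic orbifold with a cusp of type $N$ realizing the metric $\beta$; and because $\Or(Q,\matQ)$ acts transitively on the isotropic lines defined over $\matQ$ (Witt's extension theorem), every cusp of type $N$ of every orbifold commensurable to this one carries, up to similarity, the same metric. It then remains to produce, in this commensurability class, a cusp-transitive $4$-manifold with a cusp of type $N$.

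For the last step I would argue as in the proof of Theorem~\ref{thm:main0}: pass to a combinatorially tractable lattice commensurable to $\Or(Q,\matZ)$ — for instance a reflection group — and colour it as in Section~\ref{sec:layered}, obtaining a hyperbolic $4$-manifold together with a finite group of isometries acting transitively on its cusps, which are all of type $N$ and all isometric, after normalizing the horospheres, to the chosen metric $\beta$. Letting $\beta$ range over the rational inner products then gives the theorem, and the orientable refinement follows by applying Remark~\ref{rem:orient} to the manifolds produced here.

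The hard part is exactly this last step. In~\cite{N} it is enough to find \emph{some} finite-volume hyperbolic $4$-manifold with the prescribed cusp, so any manifold in the commensurability class does; here cusp-transitivity must be maintained while the commensurability class is forced to vary over the dense family of forms $Q$ coming from the rational metrics $\beta$, and the lattices $\Or(Q,\matZ)$ attached to arbitrary rational forms of signature $(4,1)$ are in general neither reflective nor visibly comparable to a $1$-cusped orbifold, so the colouring machinery of Section~\ref{sec:layered} does not apply verbatim. One must therefore either enlarge the admissible family of forms $Q$ so that the needed combinatorial input (a suitable Coxeter polytope or layered decomposition) is always available, or replace this step by a direct construction — assembling the cusp-transitive manifold out of finitely many totally geodesic hyperbolic pieces whose Euclidean vertex links realize $(N,\beta)$, with the symmetry built in, in the spirit of~\cite{N} and of the non-arithmetic construction of Section~\ref{sec:remaining}. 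A routine check, to be carried out case by case for the ten flat $3$-manifolds, is that perturbing $\beta$ within the rational inner products changes the flat metric but not the diffeomorphism type of $N$, i.e.\ that the cohomology class determining the extension $\Gamma_N$ is carried along unchanged.
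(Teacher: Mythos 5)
Your proposal does not close the theorem: the step you yourself flag as ``the hard part'' is precisely the content of the result, and the route you set up makes it essentially inaccessible. By attaching to each rational inner product $\beta$ the arithmetic lattice $\Or(\beta \perp \langle 1,-1\rangle,\matZ)$, you force the commensurability class of the ambient $4$-orbifold to vary with $\beta$ (the class is determined by the $\matQ$-similarity class of the form), whereas the only tool available for producing cusp-transitive manifolds — the reflectofold/colouring machinery of Sections~\ref{sec:reflect}--\ref{sec:remaining} — depends on the existence of a Coxeter polytope satisfying \ref{item:a} and \ref{item:b}, which is known for exactly two commensurability classes (those of $P$ and $V$). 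There is no argument supplied, and none readily available, that an arbitrary rational form of signature $(4,1)$ admits such a polytope or any other structure yielding cusp transitivity; ``enlarge the admissible family of forms'' and ``replace this step by a direct construction'' are placeholders, not proofs. (A secondary inaccuracy: within a fixed commensurability class the cusp cross-sections are \emph{not} all similar — conjugating by an element of $\Or(Q,\matQ)$ moves the lattice, so one gets a countable, typically dense, family of similarity classes; this is in fact the phenomenon the correct proof exploits.)

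The paper's proof inverts your logic: it fixes the polytope, hence the commensurability class, and varies the combinatorics. For the eight manifolds tessellated by cubes, one normalizes the Bieberbach group as in~\cite{N} and observes that the metrics whose free translation parameters have the form $(\sqrt{2}a,\sqrt{3}b,\sqrt{3}c)$ with $a,b,c\in\matQ$ are dense; any such group preserves a layered tessellation of $\matR^3$ by rescaled copies of the specific cuboid $C$ (side ratios $\sqrt{2}:\sqrt{3}:\sqrt{3}$, dictated by the dihedral angles of $Q$), which descends to $N$ and feeds directly into Theorem~\ref{thm:main}. Note that the relevant dense family is not the set of rational metrics but one adapted to the irrational side ratios of $C$. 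For $E_3$ and $E_5$ the moduli space is two-dimensional and the $(a,b)$-subdivision of a marked tessellation realizes a dense set of parameter pairs, feeding into Theorem~\ref{thm:main2}. If you want to salvage your outline, you would need to replace the family $\Or(Q,\matZ)$ by the single fixed class of $P$ and show which $\beta$ arise there — which is exactly the tessellation condition above.
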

Moreover, with the same technique, we show an analogous result in dimension $3$ (where the possible cusp types are the torus and the Klein bottle).

Finally, in Section~\ref{sec:lots}, a variant of our method, combined with some arguments of \cite{MR1952926,MR2726109}, enables the construction of a lot of manifolds with pairwise isometric cusps:

\begin{thmy*}
    For every closed flat $3$-manifold $N$, there exists a positive constant $c$ such that, for sufficiently large $V>0$, there exist at least $V^{cV}$ complete hyperbolic $4$-manifolds with pairwise isometric cusps of type $N$ and volume $\le V$. 
\end{thmy*}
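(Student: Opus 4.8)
The goal is a counting result: at least $V^{cV}$ complete hyperbolic $4$-manifolds with pairwise isometric cusps of a fixed type $N$, all of volume $\le V$. The strategy is to combine the flexible construction underlying Theorem~\ref{thm:main0} with the ``many gluings'' counting technique of Burger--Gelander--Lubotzky--Mozes and its refinements in \cite{MR1952926,MR2726109}. The starting point is a single cusp-transitive $4$-manifold $M_0$ with all cusps of type $N$, provided by Theorem~\ref{thm:main0}. Passing to a suitable finite cover, I would arrange that $M_0$ contains a large number of disjoint, geometrically controlled pieces (e.g. totally geodesic hypersurfaces, or a ``thick'' collection of separating pieces) that can be independently re-glued. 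The key point is that the cusp cross-sections are untouched by the re-gluing, so every manifold produced still has all its cusps isometric to the fixed flat metric on $N$ coming from $M_0$.

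\medskip

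\textbf{Steps.} First, fix $M_0$ as above and record the isometry type of its cusp section. Second, build a tower of covers: for each $n$, produce a cover $M_n \to M_0$ of degree roughly $d^n$ whose volume is $\mathrm{Vol}(M_0)\cdot d^n$, and which admits at least $\asymp n$ (equivalently $\asymp \log \mathrm{Vol}(M_n)$, but in fact we want $\asymp \mathrm{Vol}(M_n)$) disjoint copies of a fixed compact ``gluing region'' $W$ with two isometric boundary components. The decisive technical input, borrowed from the interval-exchange / graph-of-spaces arguments in \cite{MR1952926,MR2726109}, is that cutting along the $N_W \asymp \mathrm{Vol}$ copies of $W$ and re-gluing by one of $|\mathrm{Isom}(\partial W)|\ge 2$ choices at each copy produces $\ge 2^{N_W}$ manifolds, of which at least $2^{N_W}/(\text{a polynomial factor})$ are pairwise non-isometric — the polynomial factor accounting for the ambiguity coming from isometries of $M_n$ permuting the pieces, which is controlled because $|\mathrm{Isom}(M_n)|$ grows at most polynomially in the volume. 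Third, since each re-gluing is supported in the compact part and leaves a neighborhood of every cusp unchanged, all the resulting manifolds are complete, finite-volume, of the same volume $V_n := \mathrm{Vol}(M_n)$, and have every cusp isometric to the chosen flat $N$. Fourth, translate the count $2^{N_W}$ with $N_W \ge c' V_n$ into the bound $V^{cV}$: since $2^{c' V} = V^{(c'/\log V)\cdot V \log 2}$ is actually slightly weaker than $V^{cV}$, one must instead use a gluing construction whose number of choices at each of the $\asymp V$ sites grows with $V$ — e.g. gluing regions bounded by a high-genus surface or a flat torus with $\gg 1$ distinct self-isometries — so that the count becomes $(V^{a})^{c'V} = V^{ac'V}$; finally fill in volumes between the discrete values $V_n$ by a standard monotonicity/Dehn-filling-free interpolation argument (or simply note $V_n/V_{n+1}$ is bounded so the bound at $V_n$ gives the bound for all large $V$ up to adjusting $c$).

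\medskip

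\textbf{Main obstacle.} The serious difficulty is the third and fourth bullets combined: one needs the re-gluings to be (a) genuinely independent, (b) numerous enough per unit volume that the final count beats every polynomial — i.e. grows like $V^{cV}$, not merely $c^V$ — and (c) such that the produced manifolds are provably pairwise non-isometric after quotienting by the (polynomially bounded) isometry-permutation ambiguity. Point (b) forces the gluing loci to have isometry groups of size growing with $V$, which means I cannot use a single fixed compact piece $W$; instead I must let the covers $M_n$ contain an embedded totally geodesic cusped-or-closed hypersurface $S_n$ with $|\mathrm{Isom}(S_n)| \ge V_n^{\,a}$ (arithmetic covers of a fixed hypersurface do exactly this), cut $M_n$ along $\asymp V_n$ disjoint parallel copies of $S_n$, and re-glue by independently chosen self-isometries. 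Verifying that these re-glued manifolds are hyperbolic (they are, since the gluing maps are isometries of totally geodesic hypersurfaces, so Mostow-type rigidity and a Maskit/combination argument apply), complete with the prescribed cusp sections (true by locality), and pairwise distinct in the required numbers (this is where the arguments of \cite{MR1952926,MR2726109} do the real work, via a careful bookkeeping of how an abstract isometry must respect the canonical decomposition into pieces) is the technical heart of Section~\ref{sec:lots}.
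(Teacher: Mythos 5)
There is a genuine gap, and it is precisely the one you flag yourself: a cut-and-reglue scheme along totally geodesic hypersurfaces cannot reach the superexponential count $V^{cV}=e^{cV\log V}$. With $\asymp V$ gluing sites and a bounded number of choices per site you only get $e^{O(V)}$, and your proposed fix (sites $S_i$ with $|\Isom(S_i)|\ge V^{a}$) is quantitatively impossible. Indeed, a closed totally geodesic hypersurface is the unique geodesic representative of its isotopy class, so disjoint ``parallel copies'' must really be the distinct components of the preimage of a fixed hypersurface in a finite cover; their total $(n-1)$-volume is then $O(V)$. Since $|\Isom(S)|\le C\cdot\Vol(S)$ (Kazhdan--Margulis), the total number of regluings over $k$ disjoint sites is at most
$\prod_{i=1}^{k} C\,\Vol(S_i)\le \bigl(C'\,V/k\bigr)^{k}\le e^{C'V/e}$
by AM--GM, which is only exponential in $V$ no matter how the volume is distributed among the pieces. (A secondary obstruction: the manifolds realizing $E_3$ and $E_5$ here are non-arithmetic, where totally geodesic hypersurfaces are scarce.) So no independent-regluing construction of this kind can prove the theorem.

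The paper's route is entirely group-theoretic and avoids this wall. Take the reflectofold $R$ from Theorem~\ref{thm:main} or~\ref{thm:main2}, built from a sufficiently fine subdivision so that its Coxeter diagram is connected and contains a dashed edge; the associated Coxeter group $G$ is then neither spherical nor affine, hence \emph{large}: a finite-index torsion-free subgroup $H'\le G$ surjects onto $F_2$. Hall's formula gives at least $r\cdot r!$ subgroups of $F_2$ of index $\le r$; pulling back produces at least $r\cdot r!$ torsion-free subgroups of $G$ of index $\le dr$, i.e.\ manifold covers of $R$ of volume $\le V=vdr$, all with cusps of type $N$ because they cover the one-cusped $R$, and Stirling converts $r\cdot r!$ into $V^{cV}$. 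The input you cite from \cite{MR1952926,MR2726109} enters only at the last step, to pass from $V^{cV}$ subgroups to $V^{cV}$ isometry classes (Margulis' commensurator theorem in the non-arithmetic case, Kazhdan--Margulis in the arithmetic one). To repair your argument, replace the geometric cut-and-paste by this largeness/subgroup-growth count.
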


The construction is very similar to the one of~\cite{rizzi}. Our manifolds are built by applying Davis' \emph{basic construction}~\cite{D} to a so-called \emph{developable reflectofold}, in turn obtained by gluing some copies of a hyperbolic Coxeter polytope with the following two properties:

\begin{itemize}
\item[\mylabel{item:a}{(a)}] it has exactly one ideal vertex;
\item[\mylabel{item:b}{(b)}] 
if a bounded facet and an unbounded facet intersect, then 
their dihedral angle is 
an even submultiple of $\pi$.
\end{itemize}




We have found two hyperbolic Coxeter $n$-polytopes with $n \geq 4$ satisfying \ref{item:a} and \ref{item:b}: a $4$-polytope $P$ among Im Hof's polytopes associated to Napier cycles
\cite{IH, felikson}, and another $4$-polytope $V$ with $8$ facets, which as far as we know does not appear in the literature. Hence, we have not been able to build manifolds of dimension greater than $4$ using these techniques.

The polytope $V$ allows us to answer Questions~1.2 and 1.3 from~\cite{rizzi} (reported here) in the affirmative:

\begin{itemize}
    \item Does there exist a finite-volume hyperbolic Coxeter polytope of dimension $n \geq 4$ satisfying \ref{item:a} and \ref{item:b}? If the dimension is $n=4$ we require that the link of the ideal vertex is neither a parallelepiped nor a prism over a $(2,4,4)$-triangle.
    \item Does there exist a cusp-transitive hyperbolic $4$-manifold with cusps of type $E_3$ or $E_5$?
\end{itemize}
Indeed, the link of the ideal vertex of $V$ is a prism $K$ over an equilateral triangle; the fact that $E_3$ and $E_5$ are tessellated by copies of $K$ is crucial to the construction of the corresponding cusp-transitive manifolds. The answer to the first question naturally leads to the following.

\begin{quest}
Does there exist a finite-volume hyperbolic Coxeter polytope of dimension $n \ge 5$ satisfying \ref{item:a} and \ref{item:b}?
\end{quest}

The paper is organized as follows. In Section~\ref{sec:reflect} we describe how to obtain a cusp-transitive manifold from a $1$-cusped developable reflectofold. In Section~\ref{sec:proof} we introduce the polytope $P$ and assemble $16$ copies of it to create a bigger polytope $Q$. In Section~\ref{sec:layered} we show how to construct reflectofolds by gluing copies of $Q$ according to some cubical tessellations, and consequently prove the first case of Theorem~\ref{thm:main0}. In Section~\ref{sec:remaining} we introduce the polytope $V$ and construct reflectofolds by gluing copies of it according to certain tessellations in prisms, proving the second and final case of Theorem~\ref{thm:main0}. In Section~\ref{sec:density}, we prove Theorem~\ref{thm:density} and its $3$-dimensional counterpart. Lastly, in Section~\ref{sec:lots}, we prove Theorem~\ref{thm:lots}.

We would like to thank our advisors Bruno Martelli and Stefano Riolo for their support.

%
%
%

\section{Reflectofolds}\label{sec:reflect}
Let $N$ be a closed flat $3$-manifold. In this section we will see that, in order to show the existence of a cusp-transitive manifold with cusp type $N$, it suffices to prove that there exists a $1$-cusped \emph{developable reflectofold} with cusp type $N$. We give some definitions in the following (compare~\cite{davis-lectures, rizzi}).

\begin{defn}
    We say that a hyperbolic manifold $R$ with cornered boundary is a \emph{reflectofold} if $R$ is locally a hyperbolic Coxeter polytope.   
\end{defn}

Let $R$ be a reflectofold. Since each local model $P$ of $R$ is stratified into $k$-dimensional faces, for $k = 0, \ldots, n$, there is a naturally induced stratification of $R$ into maximal, connected, totally geodesic submanifolds with boundary, called $k$-\emph{faces}. We will call \emph{facets} and \emph{corners} the $(n-1)$-faces and $(n-2)$-faces of $R$, respectively. We can define the \emph{dihedral angle} of a corner as the dihedral angle of the corresponding ridge of a local model.


\begin{defn}
   A reflectofold is \emph{developable} if it has the following properties:
    \begin{itemize}
        \item[\mylabel{EF}{(EF)}] \emph{Embedded faces}: Each corner is contained in the intersection of two distinct facets.
        \item[\mylabel{AC}{(AC)}] \emph{Angle consistency}: All the corners in the intersection of two distinct facets have equal dihedral angles.
    \end{itemize}    
\end{defn}

\begin{prop}\label{prop:rizzi}
    Let $R$ be a $1$-cusped developable reflectofold with complete cusp section, and built by gluing finitely many finite-volume polytopes. Then, there exists a cusp-transitive manifold $M$ having cusps isometric to that of $R$.
\end{prop}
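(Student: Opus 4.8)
The plan is to build $M$ as a finite regular orbifold cover of $R$ via Davis' \emph{basic construction}, arranged so that the covering is trivial over the cusp, and then read off cusp-transitivity from the deck action. First I would encode $R$ as a reflection orbifold: conditions \ref{EF} and \ref{AC} produce an abstract Coxeter system $(W,S)$ with one generator $s_F$ for each facet $F$ of $R$ (finitely many, since $R$ is glued from finitely many polytopes), relations $s_F^2=1$ and $(s_Fs_{F'})^{m_{FF'}}=1$ where $\pi/m_{FF'}$ is the common dihedral angle of the corners contained in $F\cap F'$ — well defined, and with $F\ne F'$, exactly thanks to \ref{AC} and \ref{EF} — and no relation when $F\cap F'=\varnothing$. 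Around each point $x\in R$ the local hyperbolic Coxeter polytope model has reflection group the special subgroup $W_{S(x)}=\langle s_F: x\in F\rangle$, which is finite: the only ideal face of $R$ is the cusp vertex, where $S(x)=\varnothing$, so every $W_{S(x)}$ is a spherical Coxeter group.

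Next I would run the basic construction: set $\mathcal U:=(W\times R)/{\sim}$, with $(w,x)\sim(w',x')$ iff $x=x'$ and $w^{-1}w'\in W_{S(x)}$, and let $W$ act by $w'\cdot[w,x]=[w'w,x]$. The map $\mathcal U\to\mathcal U/W\cong R$ is a regular orbifold covering with deck group $W$; since being a hyperbolic manifold is a local condition and $\mathcal U$ is, near every point, isometric to a piece of $\mathbb H^n$ obtained by applying the basic construction to the local Coxeter polytope models, $\mathcal U$ is a hyperbolic $n$-manifold, and it is complete because it covers the complete orbifold $R$. Its point stabilizers are the finite conjugates $wW_{S(x)}w^{-1}$. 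By Selberg's lemma ($W$ is linear by Tits), $W$ has a torsion-free subgroup of finite index; replacing it by its normal core gives $K\triangleleft W$ of finite index and still torsion-free, hence meeting every point stabilizer trivially, so $K$ acts freely on $\mathcal U$. Then $M:=\mathcal U/K$ is a complete hyperbolic $n$-manifold, $G:=W/K$ is a finite group acting on $M$ by isometries with $M/G\cong R$, and $\mathrm{Vol}(M)=[W:K]\cdot\mathrm{Vol}(R)<\infty$ since $\mathrm{Vol}(R)$ is a finite sum of polytope volumes.

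It remains to inspect the cusps. The hypothesis that $R$ has a \emph{complete} cusp section means the cross-section of its unique cusp is a closed flat $(n-1)$-manifold $N$ (equivalently, no facet of $R$ reaches the ideal vertex), so $R$ has a horoball cusp neighbourhood $C$ isometric to the standard cusp on $N$ with $C$ disjoint from every facet. Hence $S(x)=\varnothing$ for all $x\in C$, so the preimage of $C$ in $\mathcal U$ is the disjoint union $W\times C$ of isometric copies of $C$, and the preimage of $C$ in $M$ is the disjoint union $G\times C$ of $|G|$ copies of $C$. Therefore every cusp of $M$ is isometric to the cusp of $R$, and $G$, acting by left translation on the $G$-factor, permutes the cusps of $M$ transitively; since $G\le\mathrm{Isom}(M)$, the manifold $M$ is cusp-transitive, as required.

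I expect the main obstacle to be the geometric content of the second paragraph: verifying that \ref{EF} and \ref{AC} are precisely the conditions under which the basic construction on $R$ outputs a genuine \emph{manifold} carrying a bona fide complete hyperbolic metric — i.e.\ that the local hyperbolic charts glue coherently and no collapsing occurs along the corners — and that completeness and finite volume really do pass from $R$ to $\mathcal U$ and then to $M$. Once these points are secured, the remaining steps (Selberg's lemma, freeness of $K$, and the cusp bookkeeping) are formal.
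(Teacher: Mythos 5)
Your proposal is correct and follows essentially the same route as the paper, which simply delegates to Davis' basic construction as carried out in Section~2 of the second author's earlier paper: form the Coxeter group on the facets, take the universal reflection cover, pass to a torsion-free finite-index normal subgroup, and let the finite deck group permute the (isometric) cusps transitively. The only point the paper makes explicit that you fold in implicitly is that completeness of $M$ is deduced from the complete-cusp-section hypothesis via the completeness criterion for gluings of polytopes, since $R$ itself is not assumed complete a priori.
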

\begin{proof}
    The desired manifold $M$ can be constructed as in~\cite[Section~2]{rizzi} by gluing finitely many copies of $R$ following Davis' basic construction~\cite{D}. Since $R$ has finite volume, $M$ will also have finite volume.
    Note that, unlike~\cite{rizzi}, we do not require reflectofolds to be complete; however, $M$ will be complete since it is obtained by gluing polytopes, and has complete cusp sections (see~\cite[Theorem~11.1.6]{complete}).
\end{proof}

\begin{rem}\label{rem:orient}
    If the cusp section of the cusp-transitive manifold $M$ thus constructed is orientable, we can also construct an orientable cusp-transitive manifold with the same cusp section, namely the orientable double cover of $M$, as in~\cite[Corollary~2.4]{rizzi}.
\end{rem}

\section{Some polytopes} \label{sec:proof}

In this section we will introduce and construct the polytopes that we are going to use in Section~\ref{sec:layered}.

Following Vinberg's paper~\cite{V}, we define a \emph{hyperbolic Coxeter polytope} as a convex polytope $P \subset \matH^n$ with finitely many facets, with dihedral angles of the form $\pi/m$ for an integer $m\ge 2$. We call \emph{facets} and \emph{ridges} the faces of $P$ of codimension $1$ and $2$, respectively.

We associate to such a polytope its \emph{Coxeter diagram}, a labeled graph defined as follows. We refer to the facets by progressive numbers $1,\dots,k$. The graph has a vertex for each facet, and if two facets $i$ and $j$ intersect with dihedral angle $\pi/m_{ij}$, then the edge joining the corresponding vertices has label $m_{ij}$. If $m_{ij} = 2$, the edge is conventionally omitted.
If the supporting hyperplanes of $i$ and $j$ are tangent at infinity, we label the corresponding edge with $m_{ij} = \infty$.
Finally, a dashed edge is drawn between any two vertices corresponding to ultraparallel facets $i$ and $j$. The edge can be labeled by $\cosh(d_{ij})$, where $d_{ij}$ is the distance between the supporting hyperplanes.

The \emph{Gram matrix} $G$ of the polytope is a real symmetric $k\times k$ matrix, defined as follows:
\begin{equation}
    G_{ij} \coloneqq 
    \begin{cases}
        1 & \text{if $i = j$;} \\
        -\cos(\pi/m_{ij}) & \text{if the facets $i$ and $j$ intersect;} \\
        -1 & \text{if the facets $i$ and $j$ are parallel;} \\
        -\cosh(d_{ij}) & \text{if the facets $i$ and $j$ are ultraparallel.}
    \end{cases}
\end{equation}
\subsection{The polytope \texorpdfstring{$P$}{P}}\label{sec:21}
We now introduce a Coxeter polytope from~\cite{IH}.

Consider the following Coxeter diagram $D_1$:

\begin{center}
\begin{tikzpicture}[thick,scale=0.8, every node/.style={transform shape}]
\node[draw, circle, inner sep=2pt, minimum size=3mm] (1) at (-1.56,-1.25)  {6};
\node[draw, circle, inner sep=2pt, minimum size=3mm] (2) at (0,-2)         {5};
\node[draw, circle, inner sep=2pt, minimum size=3mm] (3) at (1.56,-1.25)   {4};
\node[draw, circle, inner sep=2pt, minimum size=3mm] (4) at (1.95,0.45)  {3};
\node[draw, circle, inner sep=2pt, minimum size=3mm] (5) at (0.87,1.80)  {2};
\node[draw, circle, inner sep=2pt, minimum size=3mm] (6) at (-0.87,1.80) {1};
\node[draw, circle, inner sep=2pt, minimum size=3mm] (7) at (-1.95,0.45) {7};

\node at (-0.87,-1.80)  {$4$};
\node at (0.87,-1.80)   {$4$};
\node at (1.95,-0.45)   {$6$};
\node at (1.56,1.25)  {$4$};
\node at (0,2)        {$\infty$};
\node at (-1.56,1.25) {$4$};
\node at (-1.95,-0.45)  {$6$};

\draw (1) -- (2) node[above,midway]   {};
\draw (2) -- (3) node[above,near end] {};
\draw (3) -- (4) node[above,midway]   {};
\draw (4) -- (5) node[above,midway]   {};
\draw (5) -- (6) node[above,midway]   {};
\draw (6) -- (7) node[above,midway]   {};
\draw (7) -- (1) node[above,midway]   {};
\end{tikzpicture}
\end{center}

This graph is the Coxeter diagram of a finite-volume arithmetic hyperbolic Coxeter $4$-polytope $P$, introduced in \cite{IH}, which satisfies \ref{item:a} and \ref{item:b}. Indeed, by \cite{V}, the ideal vertices correspond to the maximal affine subdiagrams of the Coxeter diagram, and in $D_1$ we have exactly one of this kind (see \cite[Table 2]{V}), spanned by the vertices $1,2,4,5,6$. The subdiagram spanned by these vertices is the Coxeter diagram of a Euclidean right prism over a triangle with interior angles $\pi/2$, $\pi/4$, $\pi/4$, which is the horospherical link of the ideal vertex of $P$. Arithmeticity can be verified 
using the program CoxIter~\cite{coxiter,coxiter2}.


\begin{figure}
    \centering
\includegraphics{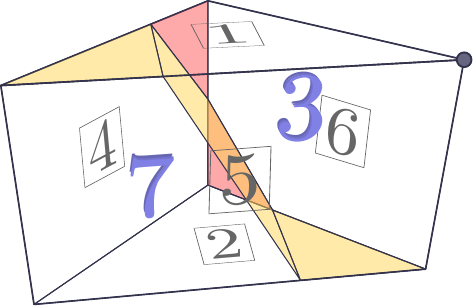}
    \color{black}
    \caption{The projection of $P$ onto the link $L$ of its ideal vertex. The five faces of $L$ and the two interior regions are labeled with the corresponding facets of $P$: five non-compact ($\fac{1},\fac{2},\fac{4},\fac{5},\fac{6}$) and two compact ($\fac{3},\fac{7}$). 
    }
    \label{fig:prismL}
\end{figure}

\subsection{Visualizing polytopes with one ideal vertex}\label{sec:visualizing}
In order to better understand the geometry of the $4$-dimensional polytope we just defined, it may prove useful to \emph{project} the compact boundary facets onto the horospherical link of the ideal vertex as follows.

Place the polytope $P$ in the half-space model of $\matH^4$ with the ideal vertex at infinity, so that the non-compact facets of $P$ become vertical, and let $\pi: \matH^4 \to \matR^3$ be the orthogonal projection onto a horizontal hyperplane, such as a horosphere or the ideal boundary. The image of $P$ under $\pi$ is the three-dimensional link $L$ of the ideal vertex, and the non-compact facets are mapped to its boundary.

We can compute the face lattice of $P$ by enumerating all spherical subdiagrams of $D$, and therefore determine the shape of the two regions associated to the facets $\fac{3}$ and $\fac{7}$ (Figure~\ref{fig:prismL}) by applying the following result.
\begin{prop}\label{prop:comb-iso}
Let $Z$ be an $n$-polytope in $\matH^n$ with one ideal vertex $v$. Let $\Lambda$ be the link of $v$. The images of the compact facets of $Z$ under the projection onto a horosphere centered at $v$ partition $\Lambda$ into a polyhedral complex, whose face lattice is combinatorially isomorphic to the lattice of compact faces of $Z$.
\end{prop}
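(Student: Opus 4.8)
The plan is to set up a projection map and carefully track how faces of $Z$ are partitioned by it. Place $Z$ in the upper half-space model $\matH^n = \matR^{n-1}\times\matR_{>0}$ with the ideal vertex $v$ at infinity. The non-compact facets of $Z$ are exactly those whose supporting hyperplanes pass through $v$; in this model they become vertical (affine) half-hyperplanes. Fix a horosphere $\mathcal{O}$ centered at $v$, i.e. a horizontal plane $\matR^{n-1}\times\{t_0\}$, and let $\pi\colon \matH^n\to\mathcal{O}\cong\matR^{n-1}$ be the vertical orthogonal projection (equivalently the nearest-point retraction onto $\mathcal{O}$ along vertical geodesics). The link $\Lambda$ is by definition $Z\cap\mathcal{O}$ (for $t_0$ large enough that $\mathcal{O}$ meets $Z$ only in the cusp neighborhood), and since $Z$ is the intersection of the vertical half-spaces bounding the non-compact facets with the half-spaces bounding the compact ones, the non-compact facets project onto the boundary facets of the Euclidean polytope $\Lambda$, which is the standard picture of the cusp cross-section.

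First I would show that the images $\pi(F)$, as $F$ ranges over the compact facets of $Z$, are the top-dimensional cells of a polyhedral subdivision of $\Lambda$. The key geometric observation is that each compact facet $F$ lies on a hyperplane not passing through $v$, which in the half-space model is a Euclidean hemisphere; vertical projection restricted to such a hemisphere (intersected with the "outside" region cut out by the other facets) is injective, so $\pi(F)$ is a genuine Euclidean polytope of dimension $n-1$, and $\pi$ carries the face structure of $F$ to that of $\pi(F)$. Next, interiors: if $F_1\ne F_2$ are compact facets, then $\operatorname{int}(F_1)$ and $\operatorname{int}(F_2)$ project to disjoint open sets, because a vertical line through an interior point $x$ of $F_1$ meets $\partial Z$ transversally in exactly one point on each side, and since $Z$ is cut out by the compact facets "from above" (the vertical geodesic from $v$ enters $Z$ through the union of compact facets) each vertical fiber hits the union of compact facets in a single point — this is where property \ref{item:a}, the existence of \emph{exactly one} ideal vertex, is used, to guarantee that the "roof" of $Z$ over $\Lambda$ is exactly the union of compact facets with no vertical overlaps and covering all of $\Lambda$. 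Together these give that the $\pi(F)$ cover $\Lambda$ with pairwise disjoint interiors, hence form a polyhedral complex $\mathcal{C}$ on $\Lambda$ whose cells are the images $\pi(G)$ of compact faces $G$ of $Z$ of all dimensions $\le n-1$.

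It then remains to identify the face lattice. I would define a map $\Phi$ from the poset of compact faces of $Z$ (ordered by inclusion) to the face poset of $\mathcal{C}$ by $\Phi(G)=\pi(G)$. Surjectivity is immediate from the previous paragraph. For injectivity and order-preservation in both directions, note that each compact face $G$ is the intersection of the compact facets containing it, and $\pi(G)$ is correspondingly the intersection of the cells $\pi(F)$ for those same facets $F$; conversely a cell of $\mathcal{C}$ is an intersection of some top cells $\pi(F_{i_1}),\dots,\pi(F_{i_r})$ and, by injectivity of $\pi$ on each roof facet plus the transversality of the roof, this intersection is $\pi\!\left(\bigcap_j F_{i_j}\right)$ with $\bigcap_j F_{i_j}$ a compact face of $Z$. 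Hence $\Phi$ is a bijection with $G\subseteq G'\iff \pi(G)\subseteq\pi(G')$, i.e. a lattice isomorphism. I expect the main obstacle to be the step showing that every vertical fiber over $\operatorname{int}\Lambda$ meets the union of compact facets exactly once — i.e. that the compact part of $\partial Z$ is a graph over $\Lambda$ — which requires a genuine convexity argument: one uses that $Z$ is convex and that $v$ is its unique point at infinity, so the geodesic ray from $v$ into $Z$ exits $\partial Z$ through a point lying on a facet whose supporting hyperplane does not contain $v$, namely a compact facet, and convexity forbids a second such crossing. Everything else is bookkeeping about intersections of facets, and the Coxeter/dihedral-angle hypotheses play no role here — only the combinatorics of "one ideal vertex" matters.
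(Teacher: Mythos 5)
Your proof is correct and follows essentially the same route as the paper's: upper half-space model with $v$ at infinity, vertical projection onto a horosphere, and the observation that each compact facet lies on a hemisphere over which the projection is injective and facet-preserving. The only organizational difference is that you establish the partition globally by showing the union of compact facets is a graph (a section of the projection) over $\Lambda$, whereas the paper argues face-by-face and inducts on codimension; your version actually makes the covering and disjoint-interiors claims more explicit than the paper's own proof does.
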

\begin{proof}
    We consider the half-space model where $v$ is at infinity. We say that a $k$-subspace of the half-space model is \emph{vertical} if its ideal boundary contains $v$.

    Let $F$ be a compact $k$-face of $Z$. Then, in the half-space model, $F$ is supported on a $k$-hemisphere $H$. Let $\Sigma$ be the unique vertical $(k+1)$-space containing $H$. The face $F$ is bounded by the supporting $(k-1)$-spaces of its facets $F_1, \dots, F_m$. Each $F_i$ lies on a unique vertical $k$-space $S_i$. As such, $F$ is the intersection of $H$ and some hyperbolic half-$(k+1)$-spaces bounded by the $S_i$ and contained in $\Sigma$. Let $P_F$ be the Euclidean $k$-polytope obtained by intersecting the projections of these half-spaces on the horosphere; then we have $F = (P_F \times (0,+\infty))\cap H$. Hence, the projection is a homeomorphism between $F$ and $P_F$, which preserves facets. Since $Z$ has only one ideal vertex, every compact face of $Z$ is contained in a compact facet. Hence, by induction on the codimension of faces, the projection preserves the face lattice of the compact boundary of $Z$.
\end{proof}

Note that dihedral angles between facets of $P$ are defined along ridges (codimension-$2$ faces), which appear in Figure~\ref{fig:prismL} either as $2$-faces (if the angle involves a compact facet) or as edges of $L$ (if the angle is between two non-compact facets). 
We mark the former by coloring certain $2$-faces of the projection, with the following rule. Every $2$-face corresponds to a ridge between either two compact facets or a compact facet and a non-compact one. In the first case, we assign the dihedral angle to the $2$-face, while in the second case, we assign the doubled dihedral angle. We use yellow for $\pi/2$, red for $\pi/3$, and we leave the $2$-face transparent for $\pi$.

This rule keeps track of the fact that, if we glue two copies of $P$ along a non-compact facet $F$, the angle along any ridge it shares with a compact facet gets doubled.
For instance, the dihedral angle between facets $\fac{1}$ and $\fac{7}$ is $\pi/4$; when doubling $P$ along $\fac{1}$, the angle becomes $\pi/2$, so we color in yellow the corresponding triangle in Figure~\ref{fig:prismL}.

Moreover, after doubling, some ridges end up with an angle of $\pi$, meaning that the compact facet is coplanar with its mirrored copy. Leaving the $2$-face transparent has the advantage of visually representing when compact facets merge together in an arbitrary gluing of copies of $P$.

\begin{figure}
    \centering
\setlength{\fboxsep}{1pt}
\setlength{\fboxrule}{0.1pt}
\begin{tabular}{cc}
    \includegraphics{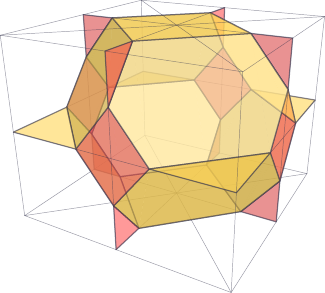}
 &  \includegraphics{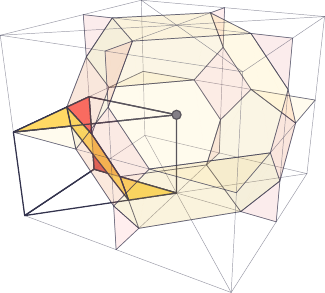}
\end{tabular}
    \color{black}
    \caption{On the left, the projection of $Q$ onto the link $C$. The truncated octahedron corresponds to a facet made of $16$ copies of the facet $\fac{3}$ of $P$.
    The other $8$ regions appear when copies of the facet $\fac{7}$ merge together two at a time. 
    On the right, we emphasize one copy of $L$ inside $C$.
    }
    \label{fig:cuboid}
\end{figure}

\subsection{The construction of the polytope \texorpdfstring{$Q$}{Q}}
In this section we will take $16$ copies of $P$ and we will glue them in order to form a bigger polytope $Q$.

Consider the subdiagram of $D$ spanned by the vertices $1,5,6$. This induces a subgroup $G \simeq \matZ_2 \times D_4$, of cardinality $16$, which is the stabilizer of a non-compact edge of $P$. This edge projects to the marked vertex in Figure~\ref{fig:prismL}.

We can define a new polytope $Q$ by taking the orbit of $P$ under $G$. Equivalently, we glue $16$ copies of $P$ along their non-compact facets $\fac{1},\fac{5},\fac{6}$ using the identity map. By construction, $Q$ has exactly one ideal vertex, whose link is a right prism $C$ with a square base, obtained by placing $16$ copies of $L$ around the marked vertex (Figure~\ref{fig:cuboid}).

The facets $\fac{3}$ merge together into a single facet of $Q$ (which we will also call $\fac{3}$), which projects to an irregular truncated octahedron in $C$, with $8$ hexagonal and $6$ quadrilateral facets.
The latter correspond to ridges of $Q$ where the facet $\fac{3}$ meets the non-compact facets: the dihedral angles are $\pi/6$ for the vertical quadrilaterals and $\pi/4$ for the horizontal ones.

The height, width and depth of $C$ are in a ratio of $\cos(\pi/4) : \cos(\pi/6) : \cos(\pi/6) = \sqrt{2} : \sqrt{3} : \sqrt{3}$.
Indeed, suppose that $C$ is the cuboid $[-x_1,x_1] \times [-x_2,x_2] \times [-x_3,x_3]$. Then, in the conformal half-space model, each non-compact facet of $Q$ is contained in the product of a facet of $C$ and $(0,+\infty)$. 
The facet $\fac{3}$ is supported on a hemisphere, which is centered at $(0,0,0,0)$ by symmetry. It is not hard to see that, if this hemisphere has radius $r$, then the acute angles with the supporting hyperplanes of the non-compact facets of $Q$ are $\theta_i \coloneqq \arccos(x_i/r)$. Hence, the cosines of the $\theta_i$ are in the same ratios as the $x_i$.

Because of this, all symmetries of $C$ must preserve or exchange the two horizontal faces. 
There are $16$ such symmetries, and they are generated by reflections in the facets $\fac{1},\fac{5},\fac{6}$ of $L$, so they extend to symmetries of $Q$ and they form a group isomorphic to $G$.
This group can also be defined \emph{a priori} as the group of symmetries of a \emph{combinatorial} cube preserving or exchanging a certain pair of opposite facets. 

\begin{rem}
The polytope $C$ naturally tessellates $\matR^3$ by translations. This gives a tessellation into truncated octahedra in the following way: some are centered and contained in the copies of $C$, as in Figure~\ref{fig:cuboid}, while the others are centered at the vertices of the tessellation; one-eighth of a truncated octahedron can be seen near each vertex of $C$ in Figure~\ref{fig:cuboid}. The two types of truncated octahedra are actually congruent, because of the symmetry of $D$ that exchanges $3$ and $7$. Indeed, passing to the dual tessellation by copies of $C$ exchanges the two types of truncated octahedra.
\end{rem}

\section{Layered tessellations of 3-manifolds}\label{sec:layered}
In this section we will prove that if a flat $3$-manifold $N$ admits a tessellation in cubes with some properties, then there is a cusp-transitive hyperbolic $4$-manifold with cusp type $N$.

\begin{defn}[Layered tessellation]
Let $T$ be a tessellation of a flat $3$-manifold $N$ into Euclidean cubes, 
with some chosen \emph{special} $2$-cells.
We say that $T$ is \emph{layered} if each cube has two opposite special facets, and whenever two cubes $C_1$, $C_2$ share a facet $F$, the reflection through $F$ sends the special facets of $C_1$ to those of $C_2$, and vice versa. 
\end{defn}

The above condition causes the special facets to fall into several embedded geodesic surfaces tessellated by squares, in a discrete analog of a foliation.

\begin{rem}\label{rem:comb-lay}
We will only make use of the combinatorial properties of layered tessellations; hence, we may define generalizations, such as layered tessellations by rectangular cuboids, in the same way.    
\end{rem}

\begin{figure}
    \centering
\begin{tabular}{rcl}
\includegraphics{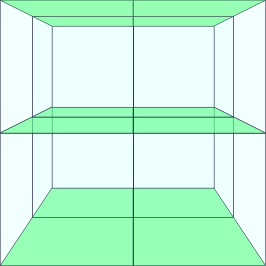}
& \raisebox{14.25ex}{\Large $\longrightarrow$} & 
\includegraphics{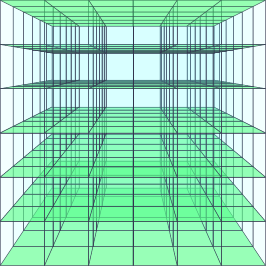}
\end{tabular}
    \color{black}
    \caption{Subdividing a layered tessellation. Special faces are colored green.}
    \label{fig:subdiv}
\end{figure}

\begin{rem}\label{rem:subdiv}
    A layered tessellation can be \emph{subdivided} by replacing every cube with a block of $n\times n\times n$ cubes, $n \ge 1$, in which we mark as special all facets parallel to the two original special facets (see Figure~\ref{fig:subdiv}).
\end{rem}

\begin{defn}
    We say that a layered tessellation is \emph{proper} if the following dual conditions hold:
    \begin{itemize}
        \item every cube is distinct from its six neighbors, which are pairwise distinct;
        \item every vertex is distinct from its six neighbors, which are pairwise distinct.
    \end{itemize}
\end{defn}

\begin{lemma}\label{lemma:proper}
    Every layered tessellation can be subdivided into a proper one.
\end{lemma}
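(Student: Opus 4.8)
The plan is to use the subdivision operation of Remark~\ref{rem:subdiv} and show that subdividing with a large enough parameter $n$ always produces a proper tessellation. First I would fix a layered tessellation $T$ of a flat $3$-manifold $N$ and recall that $N$, being closed, has a well-defined injectivity radius $\rho>0$ with respect to its flat metric. The key observation is that the two ``properness'' conditions are really local-injectivity statements: a cube failing to be distinct from one of its neighbors, or sharing two of its neighbors, means that the natural developing-type map from a small patch of the cubical complex into the universal cover $\matR^3$ fails to be injective, which forces the existence of a closed loop in $N$ of length bounded by a fixed multiple of the edge length of the cubes (at most a few edge-diagonals). Symmetrically, the vertex condition says the same for the dual complex. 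So both conditions hold automatically once the edge length of the cubes is small compared to $\rho$.

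The steps, in order, would be: (1) pass to the universal cover $\widetilde N = \matR^3$ with deck group $\Gamma$, lifting $T$ to a $\Gamma$-periodic layered tessellation $\widetilde T$ of $\matR^3$ by genuine Euclidean cubes (here I use that a layered tessellation is combinatorially, hence after rescaling metrically, standard on any simply connected piece); (2) note that subdividing $T$ into $n\times n\times n$ blocks corresponds to subdividing $\widetilde T$ the same way, which scales all edge lengths by $1/n$ while keeping the combinatorics $\Gamma$-equivariant; (3) observe that in $\matR^3$ itself both properness conditions hold trivially for the standard cubical tessellation — distinct cubes have distinct centers, the six neighbors of a cube are distinct, and dually for vertices — so the only way properness can fail downstairs is if two cubes (resp. vertices) of $\widetilde T$ that are combinatorially close get identified by a nontrivial $\gamma\in\Gamma$; (4) bound the displacement: if $\gamma$ identifies a cube with one of its neighbors or identifies two of a cube's neighbors, then $\gamma$ moves some point of $\matR^3$ a distance at most $C/n$, where $C$ is a constant depending only on the shapes of the finitely many cube types in $T$ (for a genuinely cubical tessellation one can take $C$ to be, say, $3$ times the maximal edge length of $T$, covering center-to-center and neighbor-to-neighbor distances); (5) choose $n$ with $C/n < 2\rho$ — equivalently $n$ large — so that no nontrivial deck transformation can move a point by that little, since $\Gamma$ acts freely with all nontrivial elements moving every point by at least $2\rho$. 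This rules out both failures simultaneously, and since subdivision preserves the layered structure by Remark~\ref{rem:subdiv}, the subdivided tessellation is layered and proper.

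I would handle the dual (vertex) condition in the same breath as the cube condition: the barycentric picture shows that a vertex of the subdivided $\widetilde T$ together with its six edge-neighbors spans a patch of diameter $O(1/n)$, so identifying a vertex with a neighbor, or two of its neighbors with each other, again forces a short nontrivial deck element, contradicting the injectivity radius bound once $n$ is large. One mild subtlety worth a sentence in the writeup: Remark~\ref{rem:comb-lay} allows the cubes to be combinatorial rather than metrically round, so strictly speaking $\widetilde T$ need not be metrically the standard cubulation of $\matR^3$; but properness is a purely combinatorial condition, so one may first put \emph{any} $\Gamma$-invariant flat metric making the cubes genuine Euclidean cubes (such a metric exists by assembling the finitely many cube types with a common edge length), run the argument there, and then transport the conclusion back, since the combinatorial type of the subdivided complex does not depend on the chosen metric.

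The main obstacle I anticipate is step (4): making precise the claim that a combinatorial identification of nearby cells forces a short \emph{geometric} loop, i.e., correctly relating the combinatorial distance in $\widetilde T$ to the Euclidean displacement of the offending deck transformation and getting the constant $C$ right uniformly over the finitely many local cube-shapes. Everything else — periodicity of the lift, scaling under subdivision, freeness of $\Gamma$ — is routine. If one prefers to avoid the injectivity-radius language entirely, an equivalent formulation is: the quotient map $\widetilde T/\Gamma \to T$ restricted to any ball of combinatorial radius $1$ is injective once the cubes are small, which is just local injectivity of a covering map; I would phrase the final writeup whichever way is shorter.
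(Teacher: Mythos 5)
Your proposal is correct and follows essentially the same route as the paper: give the tessellation a flat metric in which every cube has unit side length, invoke the injectivity radius of the resulting closed flat manifold, and subdivide finely enough that each cube together with its neighbors (and dually each vertex with its neighbors) sits inside an embedded ball, which rules out both failures of properness. Your version spells out the deck-transformation displacement bound that the paper leaves implicit, but the underlying argument is the same.
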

\begin{proof}
    First, we realize each cube of the tessellation as a cube with side length $1$. This gives a flat Riemannian metric on a compact $3$-manifold, which has an injectivity radius $r>0$. We then subdivide the tessellation as in Remark~\ref{rem:subdiv}, in such a way that the side length of the little cubes is less than $r$. The resulting tessellation is proper because, for every cube, the centers of it and its neighbors are contained in an embedded ball, and a similar argument applies to the vertices.  
\end{proof}

\begin{thm}\label{thm:main} 
    Let $N$ be a closed $3$-manifold that admits a layered tessellation. Then there exists an arithmetic cusp-transitive hyperbolic $4$-manifold with cusp type $N$.
\end{thm}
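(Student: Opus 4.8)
The plan is to deduce the statement from Proposition~\ref{prop:rizzi} by constructing a suitable $1$-cusped developable reflectofold, and then to observe that the construction stays inside the commensurability class of the arithmetic polytope $P$. First I would apply Lemma~\ref{lemma:proper} to replace the given layered tessellation $T$ of $N$ by a \emph{proper} one (this does not change $N$). I then build a reflectofold $R$ by taking one copy $Q_c$ of the polytope $Q$ for each $3$-cube $c$ of $T$ and gluing $Q_c$ to $Q_{c'}$ along the non-compact facets lying over $F$ whenever $c$ and $c'$ share a facet $F$. The point of the layered hypothesis is that it is exactly the coherence condition needed for this: identifying the combinatorial cube $C$ — the cuboid link of the ideal vertex of $Q$, together with its distinguished pair of opposite \emph{special} facets (the two $\sqrt3\times\sqrt3$ faces, cf.\ the ratio $\sqrt2:\sqrt3:\sqrt3$) — with the cubes of $T$, the layered condition guarantees that across every shared facet special facets are matched to special facets. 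This is precisely what is required for the $\sqrt2\times\sqrt3\times\sqrt3$ cuboid metric to be consistent across $F$, and, since $Q$ carries the group $G$ of symmetries of the combinatorial cube that preserve or exchange the special pair, the gluing map is well defined, isometric, and compatible around all lower strata (here I would invoke Remark~\ref{rem:comb-lay}, as only the combinatorial data of $T$ is used). Every facet of every cube of $T$ is shared with a neighbour, so all non-compact facets get glued and $R$ is a hyperbolic $4$-manifold with compact cornered boundary.

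Next I would verify the hypotheses of Proposition~\ref{prop:rizzi}. Since $Q$ has a single ideal vertex with link $C$, gluing the $Q_c$ along non-compact facets amounts to gluing the links $C_c$ according to $T$; hence $R$ has exactly one cusp, whose section is $N$ with the flat structure induced by $T$. This section is complete because $N$ is closed, and $R$ is built from $|T|<\infty$ copies of the finite-volume polytope $Q$ — equivalently, of $P$. That $R$ is a reflectofold is inherited from $Q$ away from the glued facets; along a glued non-compact facet the only new local model occurs at a ridge of some $Q_c$ meeting this facet. A ridge between two non-compact facets gives a Coxeter wedge, while a ridge between a non-compact facet and a compact facet of $Q_c$ has its compact facet reflected across the glued facet, so its dihedral angle is doubled — and this is still a submultiple of $\pi$ precisely because $P$, hence $Q$, satisfies property~\ref{item:b} (the relevant angles being $\pi/4$ and $\pi/6$, which double to $\pi/2$ and $\pi/3$).

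It remains to check that $R$ is developable, i.e.\ satisfies \ref{EF} and \ref{AC}, which is where the two dual properness conditions on $T$ enter. Properness ensures that each cube of $T$ together with its six neighbours — and dually, each vertex of $T$ together with its six neighbours — lies in an embedded ball, so that the local combinatorics of the glued copies $Q_c$ near any stratum is that of a genuine sub-pattern of a tessellation of $\matH^4$; this prevents a facet of $R$ from being glued to itself and forces each corner of $R$ to lie on the intersection of two \emph{distinct} facets, giving \ref{EF}. For \ref{AC}, all dihedral angles of $R$ are the ones already visible in $Q$ together with their doublings — read off from the coloured projection onto $C$ in Figure~\ref{fig:cuboid} (the angles $\pi/4,\pi/6$ where $\fac3$ meets non-compact facets, the $\pi/3$ along the hexagonal ridges coming from $\fac3$--$\fac7$ ridges, and so on) — and properness guarantees that two distinct facets of $R$ meet along a connected corner with a single such value. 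Having produced a $1$-cusped developable reflectofold $R$ with complete cusp section $N$, built from finitely many finite-volume polytopes, Proposition~\ref{prop:rizzi} yields a complete finite-volume cusp-transitive hyperbolic $4$-manifold $M$ with cusps isometric to that of $R$, i.e.\ of cusp type $N$. Finally, $M$ is tiled by copies of $R$, hence of $Q$, hence of $P$, so it is commensurable with $\matH^4/\Gamma_P$ for $\Gamma_P$ the reflection group of $P$; since $P$ is arithmetic, $M$ is arithmetic.

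The main obstacle I anticipate is the developability step: making precise how each of the two (dual) properness conditions yields the corresponding embeddedness/consistency statement \ref{EF}, \ref{AC} — in particular excluding the degenerate identifications (a facet glued to itself, or two facets coinciding along only part of their intersection) that properness is designed to rule out. A closely related technical point, to be handled carefully when assembling $R$, is checking that the gluing maps are mutually consistent around every codimension-$\ge 2$ stratum, so that $R$ is genuinely a manifold and its cusp section is exactly $N$ and not a quotient of it.
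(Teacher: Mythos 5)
Your proposal is correct and follows essentially the same route as the paper: reduce to Proposition~\ref{prop:rizzi}, pass to a proper subdivision via Lemma~\ref{lemma:proper}, glue copies of $Q$ along the layered tessellation using the symmetry group $G$, and derive \ref{EF} and \ref{AC} from the two dual properness conditions (a self-adjacent facet of type $\fac{3}$ would force a cube to coincide with a neighbour, and dually for type $\fac{7}$), with arithmeticity coming from commensurability with $P$. The only cosmetic difference is that you phrase properness in terms of embedded balls (which is how Lemma~\ref{lemma:proper} \emph{achieves} properness) rather than directly via the combinatorial distinctness of neighbouring cubes and vertices, which is what the paper actually uses to rule out the degenerate identifications.
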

\begin{proof} 
    By Proposition~\ref{prop:rizzi}, it suffices to prove that there exists a $1$-cusped developable reflectofold with cusp type $N$, obtained by gluing copies of $Q$.

    By Lemma~\ref{lemma:proper}, we may assume that the tessellation is proper.
    
    We have that $N$ is obtained by gluing some copies of $C$ along their facets, where the gluing maps are restrictions of isometries of $G$: this gluing is induced by the layered tessellation,
    where the special facets correspond to the horizontal facets of $C$. 
    Since there is a natural correspondence between facets of $C$ and non-compact facets of $Q$, and every isometry in $G$ extends to $Q$ accordingly, we can glue copies of $Q$ in the same pattern. 
    This gives a 
    $1$-cusped reflectofold with cusp type $N$. 

    It remains to prove that $N$ is developable. The facets of $N$ are hyperbolic truncated octahedra, which arise from the merging of $16$ facets $\fac{3}$ or $\fac{7}$; we will call them \emph{of types} $\fac{3}$ and $\fac{7}$ respectively. The former are centered at the centers of the cubes, while the latter are centered at the vertices of the tessellation.

    If a facet of type $\fac{3}$ were adjacent to itself, it would be so along a quadrilateral corner, and so it would also occupy a neighboring cube. However, the two cubes must be distinct by properness of the tessellation. A similar argument involving vertices works for facets of type $\fac{7}$.

    As for angle consistency, if two facets of different type intersect, they do so in a hexagonal corner with a dihedral angle of $\pi/2$.
    If a facet intersects another of the same type, say $\fac{3}$, it must do so at a single quadrilateral corner, since the neighbors of a given cube are pairwise distinct by properness; angle consistency follows trivially.
    A dual argument deals with the case of two facets of type $\fac{7}$.

    The $4$-manifold thus constructed is arithmetic since it covers the arithmetic orbifold $P$.
\end{proof}

\begin{cor}\label{cor:eight-cusps}
    Let $N$ be one of the manifolds $E_1, E_2, E_4, E_6, B_1, B_2, B_3, B_4$. Then there exists a cusp-transitive arithmetic hyperbolic $4$-manifold with cusp type $N$.
\end{cor}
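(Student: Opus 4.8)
By Theorem~\ref{thm:main}, it suffices to exhibit, for each of the eight manifolds $N$ in the statement, a layered tessellation of $N$ into Euclidean cubes. The plan is to build these tessellations uniformly from the universal cover. Recall (see~\cite{conway-rossetti}) that each of $E_1, E_2, E_4, E_6, B_1, B_2, B_3, B_4$ is isometric to $\matR^3/\Gamma$ for a torsion-free crystallographic group $\Gamma$ whose point group consists of signed permutation matrices, and — unlike $E_3$ and $E_5$, whose point group is generated by a rotation of order $3$ or $6$ — fixes at least one coordinate direction $\pm e_i$ as an unoriented line. Concretely, the point group is: trivial for $E_1$; generated by a rotation about the screw axis $e_3$ for $E_2$ and $E_4$; generated by two half-turns about perpendicular coordinate axes for $E_6$; and generated by reflections in coordinate hyperplanes for $B_1,B_2,B_3,B_4$. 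In the last three families every point-group element is diagonal, hence fixes each $e_i$; in $E_2$ and $E_4$ it fixes $e_3$ (swapping $e_1,e_2$ in the case of $E_4$).

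Fix such a $\Gamma$ and a coordinate direction $e=e_3$ preserved by its point group. After refining the standard unit cubulation of $\matR^3$ by a suitable power of $2$ — needed because the screw or glide translations of $\Gamma$ have half-integral components, but harmless since the cells remain cubes — we obtain a $\Gamma$-invariant cube tessellation $\calT$ of $\matR^3$. Declare a facet of $\calT$ \emph{special} if it is orthogonal to $e$. This marking is $\Gamma$-invariant: the linear part of each $\gamma\in\Gamma$ permutes the coordinate hyperplanes and fixes $\pm e$, so it maps hyperplanes orthogonal to $e$ to hyperplanes orthogonal to $e$, and since $\gamma$ preserves $\calT$ it maps special facets to special facets. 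Moreover $\calT$ with this marking is layered: every cube has exactly two opposite facets orthogonal to $e$; and if two cubes of $\calT$ share a facet $F$, the reflection $\sigma_F$ through $F$ either is orthogonal to $e$ — so $F$ is special and $\sigma_F$ exchanges the special facets of the two cubes — or fixes $e$, in which case it carries the two orthogonal-to-$e$ facets of one cube onto those of the other. Since the whole picture is $\Gamma$-invariant, $\calT$ descends to a layered tessellation of $N=\matR^3/\Gamma$, which consists of finitely many cubes because $\Gamma$ has a compact fundamental domain. Applying Theorem~\ref{thm:main} now yields the desired arithmetic cusp-transitive hyperbolic $4$-manifold with cusp type $N$.

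The argument is essentially a verification, with no serious obstacle; the two points requiring care are (i) making the cubulation $\Gamma$-invariant, handled by the dyadic refinement, and (ii) checking that the full point group fixes a common coordinate direction, which one reads off from the explicit generators above and which is precisely what fails for $E_3$ and $E_5$ — this is the reason those two cases require the separate treatment of Section~\ref{sec:remaining}. Alternatively, for each of the eight manifolds one can write down the layered cube tessellation by hand from the face-pairing data of~\cite[Table~12]{conway-rossetti} and verify conditions~\ref{EF} and~\ref{AC} directly, but the uniform description above is cleaner.
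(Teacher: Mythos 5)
Your proof is correct, but it takes a genuinely different route from the paper. The paper's proof of this corollary is a one-liner: it simply points to Figure~\ref{fig:N}, which exhibits an explicit layered tessellation for each of the eight manifolds (typically a single cube or a small block of cubes with face identifications read off from~\cite[Table~12]{conway-rossetti}), and then invokes Theorem~\ref{thm:main}. You instead give a uniform argument in the universal cover: choosing for each $N$ a flat metric whose translation lattice is cubic and whose point group consists of signed permutation matrices fixing the unoriented direction $e_3$, you refine the standard cubulation by a power of $2$ to make it $\Gamma$-invariant, mark the facets orthogonal to $e_3$ as special, and observe that this marking is $\Gamma$-invariant and layered, hence descends. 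This is a legitimate and arguably more systematic derivation; it also isolates cleanly why $E_3$ and $E_5$ escape this method (their holonomy forces a hexagonal lattice and fixes no coordinate direction compatible with a cube facet). What the paper's approach buys is concreteness and independence from normal forms of the Bieberbach groups — the reader can verify each picture directly — while yours buys uniformity at the cost of implicitly relying on two standard but unstated normalizations: that each of the eight diffeomorphism types admits a flat metric with cubic translation lattice and signed-permutation point group, and that the exceptional translation parts are dyadic (for $E_4$ they are quarter-integral rather than half-integral, though your ``suitable power of $2$'' already covers this). Neither point is a gap, but if you wanted this argument to stand alone you should cite or spell out the normal forms (e.g.\ Wolf's classification or~\cite{conway-rossetti}) rather than gesturing at them. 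Note also that your quotient tessellation may consist of a single cube with self-identified facets; this is consistent with the paper's usage (compare Figure~\ref{fig:N}), and properness is restored by Lemma~\ref{lemma:proper} inside the proof of Theorem~\ref{thm:main}, so no extra care is needed there.
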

\begin{proof}
    As shown in Figure~\ref{fig:N}, the manifold $N$ admits a layered tessellation, so the result follows from Theorem~\ref{thm:main}.
\end{proof}

\begin{figure}[p]
    \centering
    \addtolength{\tabcolsep}{2em}
    \begin{tabular}{c|c}
\includegraphics{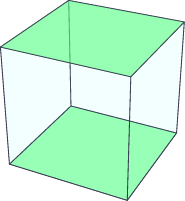}
& 
\includegraphics{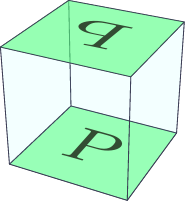}
\\[0.0ex]
\color{black} $3$-torus & $B_1$ \\[3ex]
\includegraphics{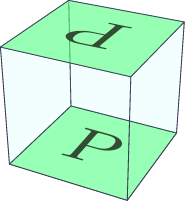}
&
\includegraphics{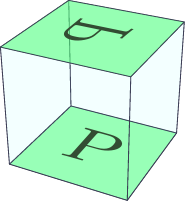}
\\[0.0ex]
\color{black} $\frac 12$-twist & $B_2$ \\[3ex]
\includegraphics{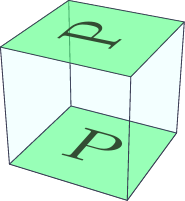}
&
\includegraphics{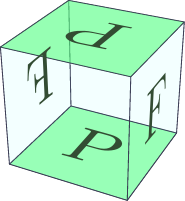}
\\[0.0ex]
\color{black} $\frac 14$-twist & $B_3$ \\[3ex]
\includegraphics{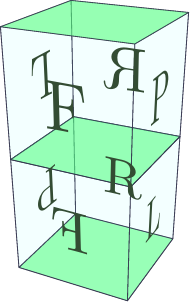}
&
\includegraphics{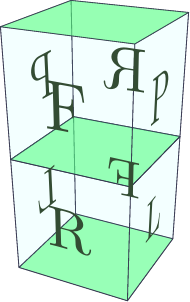}
\\[0.0ex]
\color{black} Hantzsche--Wendt & $B_4$         
    \end{tabular}
    \addtolength{\tabcolsep}{-2em}
    \color{black}
    \caption{Layered tessellations for eight closed flat $3$-manifolds. Labeled facets are glued according to their labels, while unlabeled facets are glued to the opposite facets by translation.}
    \label{fig:N}
\end{figure}

\section{The remaining manifolds}\label{sec:remaining}
In this section we will construct cusp-transitive manifolds having the $\frac 13$-twist and $\frac 16$-twist manifolds as cusp sections.

\subsection{The polytope \texorpdfstring{$V$}{V}}\label{sec:poly-V}
Let $D_2$ be the following Coxeter diagram:
\begin{center}
\begin{tikzpicture}[thick,scale=1.1,  every node/.style={scale=0.8}]
\node[draw, circle, inner sep=2pt, minimum size=3mm] (8) at (2,-1)  {8};
\node[draw, circle, inner sep=2pt, minimum size=3mm] (7) at (2,1)  {7};
\node[draw, circle, inner sep=2pt, minimum size=3mm] (6) at (1,0)   {6};
\node[draw, circle, inner sep=2pt, minimum size=3mm] (5) at (0,-1)  {5};
\node[draw, circle, inner sep=2pt, minimum size=3mm] (4) at (0,1)  {4};
\node[draw, circle, inner sep=2pt, minimum size=3mm] (3) at (-1,0) {3};
\node[draw, circle, inner sep=2pt, minimum size=3mm] (2) at (-2,-1) {2};
\node[draw, circle, inner sep=2pt, minimum size=3mm] (1) at (-2,1) {1};

\node at (1,-1.2)  {$4$};
\node at (1,1.2)   {$4$};
\node at (-2.2,0)    {$\infty$};
\node at (-1,1.25)   {$\sqrt{7/3}$};
\node at (-1,-1.25)   {$\sqrt{7/3}$};
\node at (-1.05,0.55)   {$\sqrt{7/3}$};
\node at (-1.1,-0.55)   {$\sqrt{7/3}$};
\node at (0.4,0.2)   {$\sqrt{2}$};

\draw (1) -- (2) node[above,midway]   {};
\draw[dashed] (2) -- (3) node[above,midway] {};
\draw[dashed] (1) -- (3) node[above,midway]   {};
\draw[dashed] (1) -- (4) node[above,midway]   {};
\draw[dashed] (2) -- (5) node[above,midway]   {};
\draw (4) -- (5) node[above,midway]   {};
\draw (4) -- (7) node[above,midway]   {};
\draw (5) -- (8) node[above,midway]   {};
\draw[dashed] (3) -- (6) node[above,midway]   {};
\draw (6) -- (7) node[above,midway]   {};
\draw (6) -- (8) node[above,midway]   {};
\draw (7) -- (8) node[above,midway]   {};
\end{tikzpicture}
\end{center}
The diagram $D_2$ is obviously connected, and we can check that its Gram matrix has signature $(4,1,3)$ with non-positive off-diagonal entries; hence, by Vinberg's theorem~\cite[Theorem~2.1]{V}, it defines a hyperbolic Coxeter $4$-polytope $V$. Using CoxIter~\cite{coxiter,coxiter2}, we find that $V$ has finite volume and is non-arithmetic. 
Moreover, like the polytope $P$,
it satisfies \ref{item:a} and \ref{item:b}: 
there is exactly one ideal vertex, corresponding to the unique maximal affine subdiagram of $D_2$~\cite{V}, which is spanned by $1,2,6,7,8$. Its link is a Euclidean right prism over an equilateral triangle (Figure~\ref{fig:prismK}, obtained by using Proposition \ref{prop:comb-iso}, and colored with the rule of Section \ref{sec:visualizing}). 

\begin{figure}
    \centering
    \includegraphics{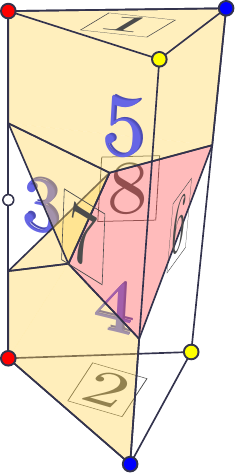}
    \color{black}
    \caption{The projection of $V$ onto the link $K$ of its ideal vertex, with labels indicating the five non-compact facets ($\fac{1},\fac{2},\fac{6},\fac{7},\fac{8}$) and three compact facets ($\fac{3},\fac{4},\fac{5}$). 
    }
    \label{fig:prismK}
\end{figure}

From the diagram $D_2$, we can see that the polytope $V$ has a symmetry $\sigma$ that exchanges the facets $\fac{1}$ and $\fac{2}$, $\fac{4}$ and $\fac{5}$, $\fac{7}$ and $\fac{8}$. This induces a half-turn rotation
of $K$ swapping each vertex with the other one of the same color in Figure~\ref{fig:prismK}. This rotation is the only nontrivial color-preserving combinatorial automorphism of $K$. 

\subsection{Marked tessellations of \texorpdfstring{$3$}{3}-manifolds}
Mirroring the previous sections, we will prove that if a flat $3$-manifold $N$ admits a tessellation in prisms over equilateral triangles with some properties, then there is a cusp-transitive hyperbolic $4$-manifold with cusp type $N$.

\begin{defn}[Marked tessellation]
Let $T$ be a tessellation of a flat $3$-manifold $N$ into right prisms over equilateral triangles, such that the vertices are colored red, yellow or blue. We say that $T$ is \emph{marked} if the vertices of each prism are colored as in Figure~\ref{fig:prismK} (up to combinatorial isomorphism), and whenever two prisms share a facet $F$, they are symmetrical with respect to $F$, including their vertex colorings.
\end{defn}

\begin{rem}
    Given a marked tessellation $T$ and two natural numbers $a,b$, we can \emph{subdivide} each prism as follows. First, note that an equilateral triangle can be subdivided into $a^2$ triangles (whose sides are $a$ times smaller), by cutting it with three sets of equally spaced lines parallel to the sides; similarly, a prism can be subdivided into $a^2$ thin prisms. Each of those can then be cut, parallel to the bases, into $b$ equal prisms. The new tessellation $T'$ can be marked provided that $b$ is odd and that $a-1$ is a multiple of $3$ (see Figure~\ref{fig:subdiv-marked} for the case $a=4$, $b=3$). Indeed, for each prism $t$ of $T$, we now have $b+1$ layers of vertices of $T'$. It is not hard to see that, whenever $a \equiv 1 \pmod{3}$, there is a unique way to $3$-color the bottom layer consistently with the lower three vertices of $t$. Each subsequent layer will be the same as the one below it, but with yellow and blue swapped. If $b$ is odd, then the top and bottom layers are the same with yellow and blue swapped, ensuring consistency with the coloring of $t$.
\end{rem}

\begin{figure}
    \centering
    \includegraphics{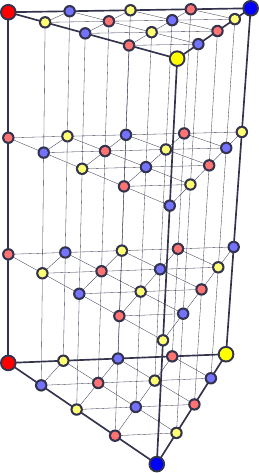}
    \color{black}
    \caption{Subdividing a marked tessellation ($a=4$, $b=3$). The pattern on the bottom face of the prism works whenever $a-1$ is a multiple of $3$.}
    \label{fig:subdiv-marked}
\end{figure}

\begin{rem}\label{rem:facetsV}
When gluing copies of $V$ along non-compact facets, the compact facets merge into two kinds of new facets, which we can visualize with the help of the projection in Figure~\ref{fig:prismK}. The first kind arises from $6$ copies of the facet $\fac{3}$ around the white dot, while the second comes from $12$ copies of the facet $\fac{4}$ or $\fac{5}$ around the corresponding yellow dot. Both kinds of facets are bounded.
\end{rem}

\begin{thm}\label{thm:main2}
    Let $N$ be a closed $3$-manifold that admits a marked tessellation. Then there exists a non-arithmetic cusp-transitive hyperbolic $4$-manifold with cusp type $N$.
\end{thm}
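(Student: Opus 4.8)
The proof runs in close parallel to that of Theorem~\ref{thm:main}, with the polytope $V$, its symmetry $\sigma$, and the prism $K$ playing the roles of $Q$, the group $G$, and the cuboid $C$. By Proposition~\ref{prop:rizzi} it is enough to exhibit a $1$-cusped developable reflectofold with cusp type $N$ and complete cusp section, assembled from finitely many copies of the finite-volume polytope $V$. As a first step I would replace the given marked tessellation $T$ by a sufficiently fine subdivision: realizing each prism with unit geometry, letting $r>0$ be the injectivity radius of the resulting flat metric on $N$, and subdividing with $a=3k+1$ and $b$ a large odd number (both admissible, and both shrinking the pieces arbitrarily) until the subdivided prisms have diameter less than $r$. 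As in Lemma~\ref{lemma:proper}, this yields a ``proper'' marked tessellation of $N$ — one in which each prism, and each edge or vertex that will support a facet of the reflectofold, together with all of its neighbours lies inside an embedded metric ball.

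Write $N=\bigsqcup_t K_t/\!\sim$ as a union of copies of the standard prism $K$. The key point is that the identifications $\sim$ are induced by \emph{full} colour-preserving combinatorial automorphisms of $K$: two prisms sharing a facet are mirror images of one another through that facet, and conjugating this reflection by the marking charts produces a colour-preserving automorphism of $K$, hence an element of $\{1,\sigma\}$, since $\sigma$ is the only nontrivial such automorphism. Using the natural bijection between the facets of $K$ and the non-compact facets of $V$, and the fact that $1$ and $\sigma$ extend to isometries of $V$, I glue copies of $V$ along non-compact facets following exactly the pattern of $T$, as in the proof of Theorem~\ref{thm:main}. The result $R$ is a reflectofold: by \ref{item:a} each copy of $V$ has a single ideal vertex, so $R$ is $1$-cusped; a ridge of $V$ between two non-compact facets corresponds to an edge of $K$, around which the prisms of $T$ wrap up to total angle $2\pi$, so these ridges become interior; and a ridge where a bounded facet meets a non-compact one carries, by \ref{item:b}, an even submultiple of $\pi$, which doubles to a submultiple of $\pi$ upon gluing along the non-compact facet. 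The cusp cross-section of $R$ is, by construction, the prism tessellation, hence a flat metric on $N$; being compact, it is complete.

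The heart of the argument is verifying developability, i.e.\ properties \ref{EF} and \ref{AC}, and this is where I expect the main difficulty. By Remark~\ref{rem:facetsV} the facets of $R$ come in two flavours: those obtained by merging six copies of $\fac{3}$ around the ``white dot'' — a distinguished edge of $K$, which the colouring pins down as an edge of $T$ — and those obtained by merging twelve copies of $\fac{4}$ or $\fac{5}$ around a ``yellow dot'', a distinguished vertex of $K$, hence a vertex of $T$. Arguing as in Theorem~\ref{thm:main}: a facet of $R$ cannot be adjacent to itself, for that would force two prisms sharing the supporting edge or vertex to coincide, contradicting properness; any corner lying in the intersection of two distinct facets of $R$ carries one of a fixed short list of dihedral angles ($\pi/2$ or $\pi/3$), dictated by the angles among the compact facets $\fac{3},\fac{4},\fac{5}$ of $V$; and when two facets meet, properness forces them to share a single corner, so angle consistency is automatic. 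The bookkeeping is heavier than for $Q$: the link $K$ is a triangular prism rather than a cuboid, and three compact facets of the building block (one merging around edges, two around vertices) rather than two are in play, so more local configurations must be examined — but each is settled mechanically once the subdivision is fine enough.

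Finally, $R$ is a finite union of finite-volume polytopes and so has finite volume; Proposition~\ref{prop:rizzi} then produces a cusp-transitive hyperbolic $4$-manifold $M$ with cusp type $N$. Since $M$ is tiled by copies of $V$ whose mutual positions are obtained by composing reflections in facet-hyperplanes of $V$ with powers of $\sigma$ — and $\sigma$ normalizes the Coxeter reflection group $\Gamma_V$ of $V$ — the fundamental group of $M$ is a finite-index subgroup of $\langle\Gamma_V,\sigma\rangle$, so $M$ is commensurable with the orbifold $\matH^4/\Gamma_V$. As CoxIter certifies that $V$ is non-arithmetic and arithmeticity is a commensurability invariant, $M$ is non-arithmetic, completing the proof.
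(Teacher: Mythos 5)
Your proposal is correct and follows essentially the same route as the paper: reduce via Proposition~\ref{prop:rizzi} to building a $1$-cusped developable reflectofold from copies of $V$, use the fact that the only nontrivial color-preserving automorphism of $K$ is induced by the symmetry $\sigma$ of $V$, pass to a sufficiently fine subdivision so that each facet together with its neighbours sits in an embedded ball (giving \ref{EF} and \ref{AC}), and deduce non-arithmeticity from commensurability with the reflection orbifold of $V$ (the paper phrases this as covering $V/\langle\sigma\rangle$, which is the same point since $\sigma$ normalizes the reflection group).
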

\begin{proof}
    The proof is similar in many aspects to that of Theorem~\ref{thm:main}, so we will only go over the main points.
    Again, by Proposition~\ref{prop:rizzi}, it suffices to prove that there exists a $1$-cusped developable reflectofold with cusp type $N$, obtained by gluing copies of $V$. 

    A marked tessellation of $N$ induces a way to glue copies of $V$ along their non-compact facets; a key fact is that every color-preserving combinatorial automorphism of $K$ is induced by a symmetry of $V$.
    This gives a $1$-cusped reflectofold with cusp type $N$. 

    In order to ensure~\ref{EF} and \ref{AC}, it suffices that each facet of the reflectofold, together with its adjacent facets, is inside an embedded ball. Indeed, this ensures that every facet is embedded and that the intersection of two adjacent facets is a single corner. Since the facets are bounded by Remark~\ref{rem:facetsV}, this can be done by an argument involving the injectivity radius and a sufficiently fine subdivision of the marked tessellation; the reflectofold constructed from the subdivided tessellation is developable.

    The $4$-manifold thus constructed is non-arithmetic since it covers the non-arithmetic orbifold $V/\langle \sigma \rangle$, where $\sigma$ is the order-$2$ isometry of $V$ defined in Section~\ref{sec:poly-V}.
\end{proof}

\begin{cor}
    Let $N$ be one of the manifolds $E_3,E_5$. Then there exists a cusp-transitive hyperbolic $4$-manifold with cusp type $N$.
\end{cor}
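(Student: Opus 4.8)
The plan is to realize $E_3$ and $E_5$ by \emph{marked tessellations} and then apply Theorem~\ref{thm:main2}, exactly as Corollary~\ref{cor:eight-cusps} applies Theorem~\ref{thm:main}.

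First I would recall the standard flat models. Both $E_3$ (the $\tfrac13$-twist) and $E_5$ (the $\tfrac16$-twist) admit flat structures with cyclic holonomy: write $E_j=\matR^3/\Gamma_j$ for $j\in\{3,5\}$, where $\Gamma_j$ is generated by the translation lattice $\Lambda$ of the regular hexagonal tiling of $\matR^2\times\{0\}$ (any planar lattice carrying a rotation of order $3$ is hexagonal, so $\Lambda$ is forced) together with a screw motion $g_j$ whose rotational part is a rotation of order $3$ (resp.\ $6$) about a vertical axis through a hexagon centre and whose vertical translation is $m_j\delta$ for some height $\delta>0$ and integer $m_j$ to be chosen below. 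A fundamental domain is then a hexagonal prism $H_j$ of height $m_j\delta$, with opposite lateral facets glued by the elements of $\Lambda$ and top glued to bottom by $g_j$.

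Next I would build the tessellation and its colouring. Slice $H_j$ horizontally into $m_j$ layers of height $\delta$, and cut each hexagonal cross-section into the $6$ equilateral triangles meeting its centre; this turns the tiling of $E_j$ by copies of $H_j$ into a tessellation $T_j$ into right prisms over equilateral triangles. Colour every vertex lying over a hexagon centre red; the remaining vertices lie over vertices of the honeycomb graph, which is bipartite, so I $2$-colour that vertex set into classes $A,B$ and colour a vertex over an $A$-vertex in the $k$-th layer yellow if $k$ is even and blue if $k$ is odd, and over a $B$-vertex with the two colours swapped. Inspecting a single triangular prism shows it carries exactly the colouring of Figure~\ref{fig:prismK} up to combinatorial isomorphism (one monochromatic red vertical edge and two ``yellow--blue'' vertical edges with the two colours interchanged between them). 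For the marking condition I would then check, facet type by facet type, that the reflection across each shared facet interchanges the two incident prisms and preserves colours: for the two lateral facets through the central edge this is built into the cone structure of $H_j$; for the outer lateral facet it uses that the regular hexagonal tiling is symmetric under the reflection in any edge line (a composition of an edge-midpoint half-turn and a perpendicular-bisector reflection); for the triangular facets it follows from the layer structure together with an explicit look at the top--bottom identification by $g_j$.

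The main obstacle is this last check, which constrains $m_j$: a rotation by $2\pi/3$ about a hexagon centre preserves the $A/B$ partition while a rotation by $\pi/3$ exchanges it, so the parity of $m_j$ must be chosen to cancel the net effect of $g_j$ on the colours — one takes $m_3$ even and $m_5$ odd (say $m_3=2$, $m_5=1$). With this choice the colourings descend to $E_3$ and $E_5$, the tessellations $T_3,T_5$ are marked, and Theorem~\ref{thm:main2} yields the desired cusp-transitive hyperbolic $4$-manifolds; since $E_3,E_5$ are orientable, Remark~\ref{rem:orient} lets them be taken orientable. Any further refinement needed for the embedded-ball hypothesis is already performed inside the proof of Theorem~\ref{thm:main2}, and one may also refine $T_j$ directly, keeping the congruence $a\equiv 1\pmod 3$ of the subdivision discussed around Figure~\ref{fig:subdiv-marked}. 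Every step besides this parity bookkeeping is a finite local verification on one prism and its six neighbours.
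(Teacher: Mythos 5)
Your proposal is correct and follows the same route as the paper: exhibit marked tessellations of $E_3$ and $E_5$ by subdividing hexagonal-prism fundamental domains into coloured triangular prisms, then invoke Theorem~\ref{thm:main2}. The paper simply displays these tessellations in Figure~\ref{fig:N2}, whereas you spell out the colouring and the parity constraint on the number of layers explicitly; the verification is sound.
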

\begin{proof}
    The manifolds $E_3$ and $E_5$ have marked tessellations, as shown in Figure~\ref{fig:N2}; the result follows from Theorem~\ref{thm:main2}.
\end{proof}

\begin{figure}
    \centering
    \addtolength{\tabcolsep}{1em}
    \begin{tabular}{cc}
    \includegraphics{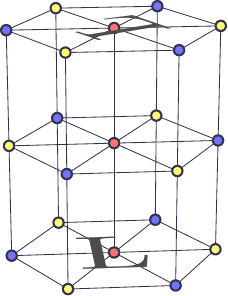}
&   \includegraphics{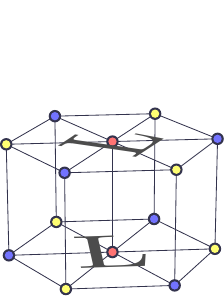}
\\
    \color{black} $\frac 13$-twist & $\frac 16$-twist
    \end{tabular}
    \color{black}
    \addtolength{\tabcolsep}{-1em}
    \caption{Marked tessellations for $E_3$ and $E_5$. Labeled facets are glued according to their labels, while unlabeled facets are glued to the opposite facets by translation.}
    \label{fig:N2}
\end{figure}

\section{Density}\label{sec:density}
In this section we will strengthen the previous results by investigating the possible Euclidean structures that can be realized by the cusp sections of a cusp-transitive $4$-manifold, and we show how the same techniques can be used in the $3$-dimensional case. We have the following result:

\begin{thm}\label{thm:density}
     For every closed flat $3$-manifold $N$, the set of flat metrics on $N$ which can be realized as cusp sections of a cusp-transitive $4$-manifold is dense in the space of all flat metrics of $N$.
\end{thm}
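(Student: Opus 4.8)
The plan is to reduce Theorem~\ref{thm:density} to a statement about deforming the cubical (resp.\ prismatic) tessellations used in Sections~\ref{sec:layered} and~\ref{sec:remaining}, in the spirit of the bending/deformation argument of~\cite{N}. The key observation is that the cusp-transitive manifolds produced by Theorems~\ref{thm:main} and~\ref{thm:main2} all have cusp sections tessellated by copies of the flat cuboid $C$ (resp.\ the flat prism $K$), with gluings prescribed by combinatorial symmetries. The moduli of flat metrics on a fixed closed flat $3$-manifold $N$ is finite-dimensional, and a dense subset of it is covered by metrics in which $N$ admits a \emph{layered} (or \emph{marked}) tessellation by cuboids (resp.\ prisms) whose shape parameters are suitably constrained; so it suffices to realize all such constrained shapes.

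First I would set up the deformation space: for the layered case, a flat metric on $N$ admitting a layered cubical tessellation can be rescaled independently in the three coordinate directions, giving layered tessellations by \emph{rectangular cuboids} (Remark~\ref{rem:comb-lay}); more generally, by subdividing (Remark~\ref{rem:subdiv}) and choosing the $n\times n\times n$ blocks with varying — but still axis-parallel — edge lengths, one gets a larger family of flat metrics, and the set of metrics obtained this way is dense in the full moduli space of flat metrics on $N$ compatible with the lattice generated by the original layered structure. The point is that the only constraint inherited from the hyperbolic construction is that the link of the ideal vertex of $Q$ is a cuboid with edge-length ratios $\sqrt2:\sqrt3:\sqrt3$; but this can be relaxed. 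Indeed, Im Hof's family~\cite{IH, felikson} is a continuous family of Coxeter (or, after deformation, merely finite-volume) polytopes, and — crucially — one can deform $P$ to a non-Coxeter polytope $P_t$ keeping property~\ref{item:a}, keeping the dihedral angles along ridges shared by a compact and a non-compact facet at even submultiples of $\pi$ (property~\ref{item:b}), while letting the link of the ideal vertex range over an open set of cuboid shapes. Gluing $16$ copies as before produces $Q_t$ with cuboid link $C_t$ of variable shape, and then the reflectofold construction of Theorem~\ref{thm:main} goes through verbatim (the developability arguments are purely combinatorial and use only properness of the tessellation), yielding a cusp-transitive $4$-manifold whose cusp section is $N$ with the flat metric determined by $C_t$.

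Next I would carry out the analogous deformation for the prismatic case (polytope $V$, link $K$ a prism over an equilateral triangle): here one wants to deform $V$ so that the link becomes a prism over an arbitrary triangle (or at least a triangle ranging over a dense set of shapes) with arbitrary height, again preserving~\ref{item:a} and~\ref{item:b} and the existence of the color-swapping symmetry $\sigma$ needed in Theorem~\ref{thm:main2}. Since $E_3$ and $E_5$ are the only cusp types not covered by cuboids, and their marked tessellations use prisms over equilateral triangles, I would check that the deformation space of $V$ is large enough to hit a dense set of flat metrics on $E_3$ and $E_5$; combined with the first part this would give the theorem for all ten flat $3$-manifolds. Finally I would record the $3$-dimensional counterpart: the same scheme with $2$-dimensional links (the cusp types being the torus and the Klein bottle) and a hyperbolic Coxeter $3$-polytope with one ideal vertex whose link is a deformable Euclidean triangle or rectangle.

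The main obstacle, which I would expect to require the most care, is verifying that the polytopes $P$ and $V$ genuinely admit deformations with the required properties — that is, that one can move the ideal-vertex link through an open (hence, after rescaling, dense-in-moduli) set of Euclidean shapes while (i) keeping exactly one ideal vertex, (ii) keeping property~\ref{item:b} (dihedral angles between a bounded and an unbounded facet staying of the form $\pi/(2k)$, which is needed so that doubling along non-compact facets still yields a reflectofold — this is the rigid constraint), and (iii) keeping the polytope convex and of finite volume. This likely forces one to deform only those geometric parameters that do not affect the critical dihedral angles: one deforms the positions of the \emph{supporting hyperplanes of the non-compact facets} (which control the link shape) while leaving the combinatorics and the compact-facet angles intact, and then checks by a dimension count or an explicit Gram-matrix computation (signature $(n,1)$ is an open condition) that such deformations exist and sweep out enough flat structures. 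The density statement then follows because Bieberbach's theorem identifies the moduli of flat metrics on $N$ with a quotient of a space of lattices/holonomy data, and the lattices arising from our tessellations-with-variable-block-shapes are dense therein.
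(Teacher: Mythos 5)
Your proposal rests on deforming the polytopes $P$ and $V$ so that the link of the ideal vertex sweeps out an open set of cuboid (resp.\ prism) shapes while preserving~\ref{item:a} and~\ref{item:b}. This is exactly the step that fails, and it cannot be repaired. Property~\ref{item:b} forces the dihedral angles between a compact and a non-compact facet to lie in the \emph{discrete} set $\{\pi/2k\}$, and the computation in Section~\ref{sec:proof} shows that the edge-length ratios of the link $C$ are $\cos\theta_1:\cos\theta_2:\cos\theta_3$ where the $\theta_i$ are precisely those angles (for $P$ one gets the rigid ratio $\sqrt2:\sqrt3:\sqrt3$). So the shape of the link is a function of the constrained angles: it cannot move continuously without breaking~\ref{item:b}, and moving the supporting hyperplanes of the non-compact facets while "leaving the compact-facet angles intact" is not an available degree of freedom. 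The same objection kills the prismatic case: the shape of $K$ is pinned by the dihedral angle data of $V$. At best you would obtain a countable family of candidate link shapes, each requiring a new existence proof, with no density argument in sight.

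The missing idea — and the one the paper actually uses — is to keep the hyperbolic polytopes \emph{fixed} and obtain density purely from the flexibility of the Euclidean tessellation. For the eight cube-tessellated manifolds, one puts the Bieberbach group $\Gamma$ in Nimershiem's normalized form, observes that the free entries of the translation vectors parametrize the flat metrics, and notes that vectors of the form $(\sqrt2\,a,\sqrt3\,b,\sqrt3\,c)$ with $a,b,c$ rational are dense; any such group preserves the lattice $\frac1d(\sqrt2\,\matZ\times\sqrt3\,\matZ\times\sqrt3\,\matZ)$ and hence a layered tessellation by scaled copies of the \emph{fixed} cuboid $C$, after which Theorem~\ref{thm:main} applies verbatim. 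For $E_3$ and $E_5$, the two moduli (side length and height of the prism fundamental domain) are approximated by choosing the two subdivision parameters $a,b$ of a marked tessellation, again with $K$ fixed. Your proposal does identify the right target (tessellations by variable-shape blocks are what you would like to have), and you correctly flag the deformation claim as the main obstacle, but the resolution you sketch goes in the wrong direction; the density has to come from arithmetic of the lattice, not from flexibility of the Coxeter polytope.
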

\begin{proof}
The argument is inspired by the proof of~\cite[Theorem~2]{N}.
We divide the proof into two cases: the manifolds $E_1,E_2,E_4,E_6,B_1,B_2,B_3,B_4$ (tessellated by cubes) and the manifolds $E_3,E_5$ (tessellated by triangular prisms).

We start with the first case. Any flat metric on $N$ is induced by a subgroup $\Gamma < \Isom(\matR^3)$. Generators for $\Gamma$ are found in~\cite[Table~1]{N} in the form $(A, t_u)$, where $A \in \mathrm{O}(3)$ and $t_u$ is a translation by the vector $u$, denoting the map $v \mapsto u+Av$.

The group $\Gamma$ can be conjugated as in~\cite[pp.~128--129]{N} so that all the matrices $A_i$ are certain fixed signed permutation matrices, which preserve the $x$ axis, and preserve or exchange the $y$ and $z$ axes. In this \emph{normalized form}, the translation vectors have some zero entries, while the $k$-uple of the other \emph{free} entries can take any value in an open set of $\matR^k$, containing $(\matR\setminus \{0\})^k$.
A dense subset of these forms can be obtained by considering only vectors of the form $v_i\coloneqq (\sqrt{2}a_i, \sqrt{3}b_i, \sqrt{3}c_i)$, where 
$a_i,b_i,c_i$ are in $\matQ\setminus\{0\}$ or $\{0\}$, depending on whether the corresponding entry of $v_i$ is free or not.
Let $\Gamma'$ be a group with parameters in this dense subset; we shall prove that the metric resulting from $\Gamma'$ can be realized by a layered tessellation by copies of $C$ (see Remark~\ref{rem:comb-lay}).

The group $\Gamma'$ preserves a lattice of the form
\[
    \left\{\frac{1}{d}(\sqrt{2}m, \sqrt{3}n, \sqrt{3}p) \mid m,n,p \in \matZ \right\},
\]
where $d$ is a common denominator of all the $a_i,b_i,c_i$. Furthermore, it preserves a layered tessellation of $\matR^3$ by copies of $C$ (having side lengths $\sqrt{2}/d$ and $\sqrt{3}/d$) with the special facets orthogonal to the $x$ axis; this is because the $x$ axis is preserved by the $A_i$. This tessellation descends to the quotient $\matR^3 / \Gamma'$. The result follows as in Theorem~\ref{thm:main}.

As for the second case, we refer to~\cite[Section~4]{conway-rossetti}. The space of flat metrics on $N$ has two parameters: in the hexagonal prism fundamental domains of Figure~\ref{fig:N2}, they can be taken as the side length and height of the prisms. When subdividing a marked tessellation in the proof of Theorem~\ref{thm:main2}, we can choose two parameters $a$ and $b$ provided that $b$ is odd, $a-1$ is a multiple of $3$, and they are sufficiently large. The effect of these parameters is to multiply the side length and height of the fundamental domains by $a$ and $b$ respectively. Since we can realize the tessellation with arbitrarily small copies of $K$, by choosing $a$ and $b$ in an appropriate way, we can approximate any values of the two parameters of the metric of $N$. The result follows as in Theorem~\ref{thm:main2}.
\end{proof}

\subsection{Dimension \texorpdfstring{$3$}{3}}
We consider the following diagram $D_3$, found in~\cite{chein}:

\begin{center}
\begin{tikzpicture}[thick,scale=1.1,  every node/.style={scale=0.8}]
\node[draw, circle, inner sep=2pt, minimum size=3mm] (1) at (-1,0)  {1};
\node[draw, circle, inner sep=2pt, minimum size=3mm] (2) at (0,0)  {2};
\node[draw, circle, inner sep=2pt, minimum size=3mm] (3) at (0.866,0.5)   {3};
\node[draw, circle, inner sep=2pt, minimum size=3mm] (4) at (0.866,-0.5)   {4};

\node at (-0.5,0.2)  {$4$};

\draw (1) -- (2) node[above,midway]   {};
\draw (2) -- (3) node[above,midway] {};
\draw (2) -- (4) node[above,midway]   {};
\draw (3) -- (4) node[above,midway]   {};
\end{tikzpicture}
\end{center}

The corresponding Coxeter polytope $Y$ is a finite-volume hyperbolic tetrahedron with one ideal vertex, corresponding to the subdiagram spanned by $2$, $3$, $4$; its link is an equilateral triangle. The polytope has the properties~\ref{item:a} and~\ref{item:b}.

\begin{thm}\label{thm:density-3d}
     For every closed flat $2$-manifold $S$ (i.e.~the torus and the Klein bottle), the set of flat metrics on $S$ which can be realized as cusp sections of a cusp-transitive $3$-manifold is dense in the space of all flat metrics of $S$.
\end{thm}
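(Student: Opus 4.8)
The plan is to mimic, in dimension $3$, the argument used in the proof of Theorem~\ref{thm:density} for the prism case, replacing the polytope $V$ by the tetrahedron $Y$ above. The building block here is the hyperbolic Coxeter tetrahedron $Y$ with Coxeter diagram $D_3$: it has exactly one ideal vertex whose link is an equilateral triangle, and it satisfies \ref{item:a} and \ref{item:b}, so that gluing copies of it along non-compact facets (= the two compact edges of $Y$ that meet the ideal vertex, which become edges of the reflectofold) and doubling behaves well. The two-dimensional analog of a layered (or marked) tessellation is a tessellation of a closed flat $2$-manifold $S$ by equilateral triangles, suitably decorated so that the decoration is compatible with the automorphisms of the link triangle that extend to symmetries of $Y$; I would first set up this notion and record, exactly as in Section~\ref{sec:reflect}, that a $1$-cusped developable reflectofold of dimension $3$ with cusp section $S$ yields a cusp-transitive hyperbolic $3$-manifold with cusps of type $S$ (Proposition~\ref{prop:rizzi} applies verbatim, since nothing there was special to dimension $4$). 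This reduces the theorem to: for a dense set of flat metrics on $S$, there is a $1$-cusped developable reflectofold, built from copies of $Y$, realizing that metric as its cusp section.

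The next step is to produce, for each flat structure in a dense set, the required tessellation of $S$ by equilateral triangles. For the torus $T^2$, a flat metric is given by a lattice $\Lambda<\matR^2$, and up to scaling and the action of $\GL_2(\matZ)$ the moduli are a single complex parameter; lattices of the form $\matZ v_1 + \matZ v_2$ with $v_1,v_2$ having coordinates that are rational combinations of $1$ and $\sqrt{3}$ (so as to match the aspect ratio $1:\sqrt3$ coming from the shape of the link triangle of $Y$, just as the $\sqrt2:\sqrt3$ ratios appeared in Theorem~\ref{thm:density}) are dense in moduli space, and each such lattice admits an invariant tessellation by equilateral triangles after a suitable fine subdivision; dividing each equilateral triangle into $a^2$ smaller ones (the trick already used in the subdivision of marked tessellations, which keeps the $3$-coloring / decoration consistent when $a\equiv 1\pmod 3$) lets us make the triangles as small as we like, which is what is needed to secure the embeddedness and angle-consistency conditions \ref{EF} and \ref{AC} via an injectivity-radius argument, exactly as in the proofs of Theorem~\ref{thm:main} and Theorem~\ref{thm:main2}. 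The Klein bottle is handled by passing to its orientation double cover (a torus), choosing a lattice and tessellation invariant under the deck involution, and checking that the involution is compatible with the decoration; alternatively one decorates a fundamental hexagon directly as in the $E_3, E_5$ pictures of Figure~\ref{fig:N2}.

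After that, the proof assembles as before: the decorated triangular tessellation of $S$ prescribes a pattern of gluings of copies of $Y$ along their non-compact facets; because every decoration-preserving combinatorial automorphism of the link triangle is realized by a symmetry of $Y$, this pattern is consistent and produces a $1$-cusped reflectofold whose cusp section is $S$ with the chosen flat metric (the metric on the cusp section is the flat metric whose fundamental domain is tessellated by copies of the Euclidean link triangle of $Y$, scaled by the subdivision parameter). Passing to a sufficiently fine subdivision makes every facet of the reflectofold, together with its neighbors, sit inside an embedded ball, which gives \ref{EF} and \ref{AC}; then Proposition~\ref{prop:rizzi} furnishes the desired cusp-transitive hyperbolic $3$-manifold. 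Since the resulting flat metrics on $S$ form a dense subset of the moduli space, the theorem follows.

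I expect the main obstacle to be bookkeeping rather than conceptual: one must identify precisely which combinatorial automorphisms of the equilateral link triangle of $Y$ lift to isometries of $Y$ (reading this off the diagram $D_3$), define the correct notion of a decorated triangular tessellation so that the compatibility-under-reflection condition exactly matches these liftable automorphisms, and verify that the $a\equiv 1 \pmod 3$ subdivision respects the decoration — the $2$-dimensional analog of the argument already carried out for marked tessellations. The Klein bottle case requires a little extra care to ensure the deck involution of its torus cover preserves both the chosen lattice (easy, by choosing a rectangular lattice in normalized form) and the decoration, but this is again routine once the torus case is in place.
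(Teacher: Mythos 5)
Your overall strategy (reduce to building a $1$-cusped developable reflectofold from copies of $Y$, via a decorated tessellation of $S$ by the link triangles, then invoke Proposition~\ref{prop:rizzi}) is the right shape, and your density arguments for the space of tori tessellated by equilateral triangles are fine in spirit. However, there is a genuine gap at the step where you claim that \ref{EF} and \ref{AC} follow ``via an injectivity-radius argument, exactly as in the proofs of Theorems~\ref{thm:main} and~\ref{thm:main2}.'' That argument needs the facets of the reflectofold to be \emph{bounded} pieces (cf.~Remark~\ref{rem:facetsV}), so that a fine enough subdivision puts each facet and its neighbours inside an embedded ball. In your construction this fails: reading off $D_3$, the compact facet $\fac 1$ of $Y$ meets the non-compact facets $\fac 3$ and $\fac 4$ at angle $\pi/2$, so gluing two copies of $Y$ along $\fac 3$ or $\fac 4$ doubles that angle to $\pi$ and the two copies of $\fac 1$ merge into a single geodesic facet. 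Since every triangle of your decorated tessellation has exactly one edge labelled $\fac 3$ and one labelled $\fac 4$, the merged components are $2$-regular chains of triangles; with the direction-class decoration implicit in your ``$3$-colouring'' these are strips that wrap all the way around the cusp section. Subdividing makes the triangles smaller but the strips stay macroscopic (indeed longer and thinner), so the local embedded-ball argument does not apply. One can still try to verify \ref{EF} and \ref{AC} globally for the strips (all surviving corners have angle $2\cdot\pi/4=\pi/2$, so \ref{AC} is automatic, and distinct adjacent strips give \ref{EF}), but that is a different, global check that you would have to carry out — and you would first have to notice that the facets merge at all.

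The paper sidesteps this entirely by a different assembly: it first glues six copies of $Y$ around the edge $\fac 3\cap\fac 4$ (total angle $6\cdot\pi/3=2\pi$) to form a polytope $Z$ with a regular hexagonal link, six non-compact facets (copies of $\fac 2$) and a \emph{single bounded} compact facet meeting each of them at $\pi/4$; copies of $Z$ are then glued along a tessellation of $S$ by regular hexagons, so every facet of the reflectofold is an embedded bounded hexagon and every corner has angle $\pi/2$, making developability immediate with no decoration bookkeeping (the full symmetry group of the regular hexagon extends to $Z$). This amounts to choosing, in your language, the decoration in which the $\fac 3,\fac 4$ edges of each triangle are the two incident to a chosen colour class of vertices — the one decoration for which the merged facets are compact. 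Your proposal as written does not make this choice and does not address the unbounded-facet issue, so the developability step is not justified. (The density step then also changes: the paper approximates an arbitrary fundamental parallelogram, resp.\ rectangle, by snapping its vertices to centres of a fine regular hexagonal tessellation, rather than imposing arithmetic conditions on the lattice; for the Klein bottle your deck-involution compatibility check would still be needed in either formulation.)
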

\begin{proof}
    We begin by gluing six copies of $Y$ around the edge between the facets $\fac{3}$ and $\fac{4}$. The resulting polytope $Z$ has one ideal vertex, whose link is a regular hexagon, and one compact facet, which meets the other six facets at an angle of $\pi/4$.

    If $S$ has a tessellation by regular hexagons, such that every hexagon is embedded, then we can glue copies of $Z$ along their non-compact facets in the same pattern. The resulting manifold with corners $R$ is a developable reflectofold: its dihedral angles are all equal to $2\cdot \pi/4 = \pi/2$, also implying \ref{AC}, and its facets are embedded hyperbolic hexagons~\ref{EF}.
    Moreover, the section of its unique cusp is isometric to $S$ up to global rescaling. As usual, by Proposition~\ref{prop:rizzi}, we can construct a cusp-transitive manifold which covers $R$, with cusps isometric to $S$.

    It remains to show that, up to arbitrarily small perturbations, every flat metric on the torus or the Klein bottle admits a tessellation by regular hexagons.
    
    First, consider a parallelogram fundamental domain $D_T$ for the torus, and overlay it onto a tessellation of small regular hexagons. We can perturb $D_T$ by moving three vertices to the nearest hexagon center, and the fourth in such a way as to make a parallelogram; the fourth vertex will also fall into the center of a hexagon. This gives our desired tessellation.

    As for the Klein bottle, the generic fundamental domain $D_K$ is a rectangle, with two opposite sides $s_1$,$s_2$ identified with a twist. We overlay $D_K$ onto a tessellation of small regular hexagons with some sides parallel to $s_1$,$s_2$. We can perturb $D_K$ by translation or scaling along either axis, obtaining a rectangle with all vertices at the centers of hexagons. Note that the tessellation is symmetrical with respect to the line joining the midpoints of the perturbed $s_1$ and $s_2$. Hence, we have a tessellation of the Klein bottle.
\end{proof}

\section{Lots of manifolds}\label{sec:lots}
In this section we prove the following theorem.
\begin{thm}\label{thm:lots}
    For every closed flat $3$-manifold $N$, there exists a positive constant $c$ such that, for sufficiently large $V>0$, there exist at least $V^{cV}$ complete hyperbolic $4$-manifolds with pairwise isometric cusps of type $N$ and volume $\le V$. 
\end{thm}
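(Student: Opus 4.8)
The plan is to adapt the well-known "gluing exponentially many pieces" technique of Burger–Gelander–Lubotzky–Mozes and its refinements \cite{MR1952926, MR2726109} to the cusped, cusp-preserving setting. The starting object is the $1$-cusped developable reflectofold $R$ (obtained from copies of $Q$ or $V$) realizing cusp type $N$, together with the associated cusp-transitive manifold $M$; its unique cusp has a fixed flat section $N$. First I would pass to a manifold with many disjoint, "symmetric" chunks that can be recombined freely. Concretely, take a suitable finite cover $M_0 \to M$ in which a chosen hyperbolic hypersurface $\Sigma$ (a totally geodesic facet-mirror or a separating embedded hypersurface coming from the reflectofold structure) lifts to many parallel disjoint copies, cutting $M_0$ into $\sim n$ congruent "slabs" $W_1,\dots,W_n$, each with two boundary copies of $\Sigma$ and each carrying a prescribed number of cusps, all with section isometric to $N$. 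The key point is that all the cusps stay of type $N$ throughout: because the gluings along $\Sigma$ are by isometries that do not touch the horospherical cross-sections, reassembling the slabs never changes the cusp shapes.

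The second step is the combinatorial count. Following \cite{MR2726109}, consider the set of ways to glue $n$ copies of a fixed slab $W$ cyclically (or along a fixed trivalent/cubical pattern) using the isometry $\Sigma \to \Sigma$ and a free choice, at each gluing site, among a fixed finite set of $\ge 2$ isometries of $\Sigma$ that extend over $W$. This produces at least $k^{n}$ hyperbolic $4$-manifolds for some $k \ge 2$; each has volume $n \cdot \Vol(W) =: n v_0$, and each has exactly $m := n \cdot(\text{cusps per slab})$ cusps, all isometric to $N$. Setting $V = n v_0$, we get $\ge k^{n} = k^{V/v_0} = 2^{c_0 V}$ manifolds of volume exactly $V$ (hence $\le V$) with pairwise isometric cusps of type $N$. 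To upgrade the base from $2$ to $V$ — i.e. to get $V^{cV}$ rather than merely $e^{cV}$ — I would instead let the number of gluing choices grow with $n$: using cyclic covers of the slab, or the "many non-isomorphic pieces" trick of \cite{MR1952926}, one arranges $\sim n$ distinct admissible gluing isometries at each of $\sim n$ sites, giving $\gtrsim n^{n} = (V/v_0)^{V/v_0} \ge V^{cV}$ for $V$ large.

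The third, routine step is to bound the over-counting: different gluing data can a priori yield isometric manifolds, but by a standard argument (each isometry of the glued manifold must preserve the canonical decomposition along the shortest closed geodesics or along the embedded copies of $\Sigma$, since these are rigid and of controlled length) the multiplicity is at most $(Cn)^{Cn}$ for an absolute constant, which is negligible against $n^n$ after adjusting $c$. Combining, for $V$ sufficiently large there are at least $V^{cV}$ pairwise non-isometric complete finite-volume hyperbolic $4$-manifolds with all cusps isometric to $N$ and volume $\le V$.

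I expect the main obstacle to be the third step in the cusped setting: ensuring that the combinatorial gluing data is genuinely recoverable from the isometry type of the resulting open manifold. In the closed case one invokes thick-part geometry and systole bounds; here one must instead control the geometry near the cusps and make sure the separating hypersurfaces $\Sigma$ cannot be "moved" by an ambient isometry — this requires choosing $\Sigma$ and the covers so that the $\Sigma$-pieces are the unique shortest totally geodesic hypersurfaces of their homotopy class, or else invoking a Mostow-rigidity plus combinatorial-rigidity argument as in \cite{MR2726109}. Everything else (arranging the cover $M_0$, keeping cusp sections fixed, the volume bookkeeping) is a direct adaptation of the cited constructions together with the reflectofold machinery of Sections~\ref{sec:reflect}--\ref{sec:remaining}.
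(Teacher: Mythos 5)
Your overall strategy (a BGLM-style lower bound adapted to preserve cusp shapes) is the right one, but the cut-and-paste mechanism you propose has two genuine gaps, and the paper takes a different, purely algebraic route that avoids both. The critical gap is in your second step: to get $V^{cV}$ rather than $2^{cV}$ you need roughly $n^n$ distinct gluings, and you propose ``$\sim n$ distinct admissible gluing isometries at each of $\sim n$ sites.'' But the isometry group of a fixed finite-volume hyperbolic hypersurface $\Sigma$ is \emph{finite}, so the number of admissible gluing isometries at a site is a constant independent of $n$; you cannot simply ``arrange'' it to grow. The factorial growth in \cite{MR1952926} does not come from enlarging the set of gluing maps at each site --- it comes from counting index-$r$ subgroups of a nonabelian free quotient of the lattice (equivalently, from permuting \emph{which} copies of the piece are glued to which, a pair of permutations in $S_r$), via Hall's formula that $F_2$ has at least $r\cdot r!$ subgroups of index $\le r$ \cite{hall}. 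Your appeal to ``cyclic covers of the slab'' does not produce this: deck groups of cyclic covers give at most $n$ total extra symmetries, not $n$ independent choices at each of $n$ sites. The second gap is cusp preservation: if $\Sigma$ meets the cusps, regluing along $\Sigma$ cuts and reglues the flat cusp sections and will generically change their isometry type; you assert this does not happen but give no mechanism (e.g.\ choosing $\Sigma$ compact), and finding a compact separating totally geodesic hypersurface dual to a class in $H^1$ that lifts to many parallel copies is itself a nontrivial construction you leave unaddressed.

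For comparison, the paper sidesteps both issues by staying algebraic. It takes the Coxeter group $G$ generated by reflections in the facets of the reflectofold $R$; since the facets of $R$ are all \emph{compact} (this is the point of conditions \ref{item:a} and \ref{item:b}), every manifold cover of $R$ in Davis' construction has one cusp per copy of $R$, each isometric to the fixed section $N$ --- cusp preservation is automatic. The associated Coxeter diagram is connected with a dashed edge, so $G$ is neither spherical nor affine and hence large by \cite[Corollary~2]{large}; a torsion-free finite-index subgroup $H'$ surjects onto $F_2$, and pulling back Hall's $r\cdot r!$ subgroups gives $V^{cV}$ torsion-free subgroups of index $\le dr$. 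The over-counting of isometry classes is then handled not by your proposed geometric rigidity of the decomposition (which would be delicate for open manifolds) but by the standard commensurator argument of Margulis \cite{MR1090825} in the non-arithmetic case and Kazhdan--Margulis as in \cite[Section~5.2]{MR2726109} in the arithmetic case. If you want to salvage your approach, the cleanest fix is to replace ``many gluing isometries per site'' with ``count the index-$r$ subgroups of a free quotient,'' at which point you have essentially rediscovered the paper's proof.
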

\begin{proof}
    Let $R$ be a reflectofold constructed as in the proof of Theorem \ref{thm:main} or \ref{thm:main2} (according to $N$). Let $D$ be the Coxeter diagram with one vertex for each facet of $R$, where if two facets meet with dihedral angle of $\pi/k$ (resp.~do not intersect), the corresponding vertices are joined by an edge labeled $k$ (resp.~a dashed edge).

    If $R$ is constructed from a sufficiently fine subdivision of a tessellation, then the diagram $D$ is connected. Indeed, two non-adjacent facets are connected by a dashed edge, while for any two adjacent facets $F$, $F'$ there exists a third one not adjacent to them (in the projection onto the link, it can be found in a large embedded ball centered on, say, $F$). As a consequence, we can also assume that $D$ has a dashed edge.
    
    Let $G$ be the Coxeter group associated to $D$. It is neither affine nor spherical, since $D$ has a dashed edge (compare~\cite[Tables~1-2]{V}). By~\cite[Corollary~2]{large}, $G$ has a finite-index subgroup $H$ with a quotient isomorphic to the free group $F_2$. Since $H$ is a subgroup of a Coxeter group, it has a torsion-free subgroup $H'$ such that $d\coloneqq [G:H'] < +\infty$. The image of $H'$ in $F_2$ is free of rank at least $2$, so $H'$ also has a quotient isomorphic to $F_2$. Recall that $F_2$ has at least $r\cdot r!$ subgroups of index $\le r$~\cite{hall}. Hence, by pulling back to $H'$, we obtain at least $r\cdot r!$ torsion-free subgroups of $G$ of index $\le dr$. These correspond to manifold covers of $R$ with degree $\le dr$, which have pairwise isometric cusps of type $N$ by construction. As in the proof of Proposition~\ref{prop:rizzi}, we can show that these manifolds are complete.

    Let $v$ be the volume of $R$ and let $V \coloneqq vdr$. Then our manifolds have volume $\le V$. Using Stirling's approximation, we have the following estimate (for some $k,k',c > 0$ and for $r$ large):

    \begin{align*}
        \log (r\cdot r!)
        &= \log r + \log r!
    \\  &= \log r + r \log r - r + O(\log r) 
    \\  &\ge k r\log r 
    \\  &= k' \frac{V}{vd}\log \frac{V}{vd}
    \\  &\ge c V\log V
    \\  &= \log(V^{cV}).
    \end{align*}

    Hence, we have at least $V^{cV}$ torsion-free subgroups of $G$. The associated manifolds (of volume $\le V$) are not necessarily distinct; however, the same estimate holds on the number of isometry classes, with a smaller constant $c$. 
    Indeed, if $G$ is non-arithmetic, we conclude with the same argument of \cite[``The lower bound'']{MR1952926} using Margulis' theorem on the commensurator \cite[Theorem~1, p.~2]{MR1090825}, while if $G$ is arithmetic, we conclude as in \cite[Section $5.2$]{MR2726109} using the Kazhdan--Margulis theorem.
\end{proof}


\section*{References}
\printbibliography[heading=none]

\end{document}